\numberwithin{equation}{section}
\newtheorem{thm}{Theorem}[section]
\newtheorem{prop}[thm]{Proposition}
\newtheorem{lem}[thm]{Lemma}
\theoremstyle{remark}
\newtheorem{rem}[thm]{Remark}
\theoremstyle{definition}
\newtheorem{defn}{Definition}[section]
\renewcommand\thefootnote{\ding{\numexpr171+\value{footnote}}}
\begin{document}
	\title{Weak solutions of the three-dimensional hypoviscous elastodynamics with finite kinetic energy}
	\author{Ke Chen\footnotemark[1] \and Jie Liu\footnotemark[2]}
	\renewcommand{\thefootnote}{\fnsymbol{footnote}}
	\footnotetext[1]{School of Mathematical Sciences, Fudan University, Shanghai 200433,  China. 
		Email: kchen18@fudan.edu.cn.}
	\footnotetext[2]{School of Mathematical Sciences, Fudan University, Shanghai 200433,  China. 
		Email: j\_liu18@fudan.edu.cn.}
	\maketitle
	\begin{abstract}
		We construct weak solutions to the 3D hypoviscous incompressible elastodynamics with finite kinetic energy which was unknown in literatures. Our result holds for fractional hypoviscosity $(-\Delta)^\theta$, where $0\leq\theta<1$. The proof {consists of a convex integration scheme with new building blocks of 2D intermittency and suitable temporal correctors, which are motivated by} the inherent geometric structure of the viscoelastic equations.
	\end{abstract}
	\textbf{Keywords:} Viscoelastic Fluid; Convex Integration; Weak Solution
	
	\section{Introduction}
	
	We consider the following Oldroyd-B system describing incompressible viscoelastic fluids posed on the periodic box $\mathbb{T}^3$:
	\begin{equation}\label{elastic1}
		\left\{
		\begin{aligned}
			&\partial_tv+v\cdot\nabla v+(-\Delta)^\theta v+\nabla p=\operatorname{div}(FF^T),\\
			&\partial_tF+v\cdot\nabla F=\nabla v F,\\
			&\operatorname{div} v=0, ~\operatorname{div} F^T=0.
		\end{aligned}
		\right.
	\end{equation}
	Here $\theta\in[0,1)$ is a given constant, $v(t,x)$ represents the velocity field of fluids, $p(t,x)$ the scalar pressure, and $F(t,x)$ the deformation gradient tensor. We focus on solutions with the following normalization:
	\begin{equation*}
		\int_{\mathbb{T}^3}v dx=0,\quad\quad\quad \int_{\mathbb{T}^3}(F-\mathrm{Id}) dx=0,
	\end{equation*}
	where $\mathrm{Id}$ is the identity matrix. Throughout the paper we will adopt the notations of
	\begin{equation*}
		(\nabla v)_{ij}=\frac{\partial v_i}{\partial x_j}, ~~~~~~
		(\nabla\cdot F)_i=\partial_jF_{ij}.
	\end{equation*}
	Here and in what follows, we use the summation convention over repeated indices.
	
	Our goal is to construct weak solutions of \eqref{elastic1}. We first introduce the notion of weak solutions that we consider in this paper as follows:
	\begin{defn}\label{definition}
		Let $T>0$, we say $(v,F)\in C([0,T];L^2(\mathbb{T}^3))$ is a weak solution of system \eqref{elastic1} if $v$ and $F^T$ are weakly divergence free, and they satisfy \eqref{elastic1} in the sense of distributions, that's to say,
		\begin{equation*}
			\begin{aligned}
				&\int_{0}^{T} \int_{\mathbb{T}^{3}} \partial_{t} \psi \cdot v+\nabla \psi :(v \otimes v-FF^T) -(-\Delta)^\theta \psi \cdot v d x d t=0, \\
				&\int_{0}^{T} \int_{\mathbb{T}^{3}} \partial_{t} \psi \cdot F^k+\nabla \psi :(F^k \otimes v-v \otimes F^k) d x d t=0, \quad\quad k=1,2,3,
			\end{aligned}
		\end{equation*}
		hold for any divergence free function $\psi\in C_0^\infty((0,T)\times \mathbb{T}^3)$, where $F^k (k=1,2,3)$ is the $k$-th column vector of $F$.
	\end{defn}
	
	Before proceeding, we mention some famous works in the elastic and viscoelastic fluid  equations. In \cite{Siderisglobal}, Sideris and Thomases proved the global well-posedness of classical solutions to the 3D incompressible isotropic elastodynamics with small initial displacements. For 2D incompressible isotropic elastodynamics, the almost global existence  was proved in \cite{LeiAlmost} for small initial data. By observing an improved null structure for the system in Lagrangian coordinates, Lei proved the global well-posedness of 2D incompressible elastodynamics in \cite{LeiGlobal}. 
	As for viscoelastic fluid, Chemin and Masmoudi \cite{CheminAbout2001} proved the existence of {local and global small solutions in critical Besov spaces. 
		The }global well-posedness near equilibrium was first obtained in \cite{LinOn} for the two-dimensional case. Lei and Zhou \cite{LeiGlobal2005} obtained similar results through an incompressible limit process working directly on the deformation tensor $F$. More recently, Hu and Lin \cite{Huglobal} gave a global weak solution with discontinuous initial data in 2D. Zhu \cite{ZhuGlobal} proved the global existence of small solutions without any physical structure in 3D. We also refer to \cite{ChenGlobal,  GuillopeGlobal, ElgindiGlobal,LeiGlobal2008,Lions} for related results.
	To our knowledge, there is no result for existence of global weak solutions when the initial data is not small. In this paper, we construct infinitely many weak solutions with finite energy, while the initial data can be arbitrarily large. Theorem \ref{main thm} also demonstrates the nonuniqueness of $C_tL_x^2$ weak solutions to the Cauchy problem of \eqref{elastic1}, {where the initial data is satisfied in $L^2$ space.}
	
	The main result is stated as follows:
	\begin{thm}
		Suppose $\theta\in[0,1)$ is a given constant. For any {$T>0$,} there exist non-trivial weak solutions $(v,F)\in C([0,T];H^\beta(\mathbb{T}^3))$ of \eqref{elastic1} in the sense of Definition \ref{definition}, where the constant $\beta>0$ depends only on $\theta$.
	\end{thm}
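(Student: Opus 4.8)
The plan is to establish the theorem by a convex integration iteration with intermittent building blocks, tailored to the viscoelastic structure. I first reformulate \eqref{elastic1} in a form suited to iteration. Since $\operatorname{div}F^T=0$, each column $F^k$ ($k=1,2,3$) of $F$ is divergence free and satisfies the frozen-in transport law $\partial_t F^k+\operatorname{div}(v\otimes F^k-F^k\otimes v)=0$, while $\operatorname{div}(FF^T)=\sum_{k=1}^{3}\operatorname{div}(F^k\otimes F^k)$. Hence \eqref{elastic1} becomes a magnetohydrodynamics-type system for $(v,F^1,F^2,F^3)$ in which the momentum equation carries only the sub-Laplacian dissipation $(-\Delta)^\theta$ and the three $F^k$-equations carry no dissipation at all. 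This is the "inherent geometric structure" to be exploited: the momentum defect is a symmetric tensor built from $v\otimes v-\sum_k F^k\otimes F^k$, and each $F^k$-defect is an antisymmetric tensor built from $v\otimes F^k-F^k\otimes v$.

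Accordingly, I would set up the relaxed elastodynamics--Reynolds system: smooth $(v_q,F_q^1,F_q^2,F_q^3)$ together with a symmetric trace-free Reynolds stress $\mathring R_q$ and antisymmetric "magnetic" stresses $M_q^1,M_q^2,M_q^3$ solving
\begin{align*}
&\partial_t v_q+\operatorname{div}(v_q\otimes v_q)-\textstyle\sum_k\operatorname{div}(F_q^k\otimes F_q^k)+(-\Delta)^\theta v_q+\nabla p_q=\operatorname{div}\mathring R_q,\\
&\partial_t F_q^k+\operatorname{div}(v_q\otimes F_q^k-F_q^k\otimes v_q)=\operatorname{div}M_q^k,\qquad \operatorname{div}v_q=\operatorname{div}F_q^k=0,
\end{align*}
with inductive bounds $\|v_q\|_{C^0}+\sum_k\|F_q^k\|_{C^0}\lesssim1$, $\|\mathring R_q\|_{L^1}+\sum_k\|M_q^k\|_{L^1}\le\delta_{q+1}$, and a controlled growth of $\|v_q\|_{H^\beta}+\sum_k\|F_q^k\|_{H^\beta}$, where $\delta_q=\lambda_q^{-2\beta}$ and $\lambda_q\to\infty$ superexponentially. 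The base case $q=0$ is elementary. The iterative step adds perturbations $w_{q+1}=v_{q+1}-v_q$, $d_{q+1}^k=F_{q+1}^k-F_q^k$, whose principal parts are superpositions of 2D intermittent building blocks, $w_{q+1}^{(p)}=\sum_\xi a_\xi(t,x)\,\mathbb{W}_\xi$ and $d_{q+1}^{k,(p)}=\sum_\xi b_\xi^k(t,x)\,\mathbb{D}_\xi^k$, where the profiles $(\mathbb{W}_\xi,\mathbb{D}_\xi^k)$ are fast, highly concentrated flows built from a finite family of mutually compatible stationary solutions of the elastodynamics equations (a geometric lemma is needed here). The amplitudes are chosen so that the low-frequency part of $w^{(p)}\otimes w^{(p)}-\sum_k d^{k,(p)}\otimes d^{k,(p)}$ cancels $-\mathring R_q$ and, \emph{simultaneously}, the low-frequency part of $d^{k,(p)}\otimes w^{(p)}-w^{(p)}\otimes d^{k,(p)}$ cancels $-M_q^k$; one then adds incompressibility correctors and, since neither the $F^k$-equations nor (fully) the momentum equation smooth in time, temporal correctors $w_{q+1}^{(t)},d_{q+1}^{k,(t)}$ absorbing the time derivatives of the principal amplitudes.

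It then remains to estimate the new defects. Here $\mathring R_{q+1}$ and $M_{q+1}^k$ are obtained by applying the inverse-divergence operator $\mathcal R$ (and its antisymmetric counterpart) to the various errors: the oscillation error (gaining negative powers of $\lambda_{q+1}$ at the cost of positive powers of the concentration parameter, balanced via 2D intermittency so that $\|w_{q+1}\|_{L^1}$ stays small), the quadratic interaction error, the transport and Nash errors, the dissipation error $(-\Delta)^\theta w_{q+1}$ — and $\theta<1$ is exactly what keeps $\|(-\Delta)^\theta w_{q+1}\|_{L^1}$ summably small — and the errors generated by the temporal correctors. Choosing the parameters, and $\beta=\beta(\theta)>0$ small enough, so that all errors are $\le\delta_{q+2}$ while $\sum_q(\|w_{q+1}\|_{H^\beta}+\sum_k\|d_{q+1}^k\|_{H^\beta})<\infty$, the scheme converges in $C([0,T];H^\beta(\mathbb{T}^3))$ to a pair $(v,F)$ with $\mathring R_q,M_q^k\to0$ in $L^1$. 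Reassembling $F$ from its columns yields $\operatorname{div}F^T=0$ in the weak sense, so $(v,F)$ is a weak solution in the sense of Definition \ref{definition}; non-triviality is ensured by keeping, say, $\|v_1\|_{L^2}$ bounded below and all perturbations small (and, with the usual care, one may prescribe the kinetic energy and the initial datum, which also yields the stated nonuniqueness of the Cauchy problem).

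The main obstacle is the combination of two features. First, the building blocks must resolve a \emph{double} algebraic constraint — cancelling a symmetric stress through $w\otimes w-\sum_k d^k\otimes d^k$ and, at the same time, three antisymmetric stresses through $d^k\otimes w-w\otimes d^k$ — which, unlike the single quadratic form available for Navier--Stokes, forces a delicate choice of mutually orthogonal stationary profiles dictated precisely by the viscoelastic geometry. Second, because the $F^k$-equations have no dissipation and the momentum equation only a sub-Laplacian one, the time-derivative errors cannot be absorbed by smoothing and must instead be killed by temporal correctors whose own contributions have to be controlled. Making the frequency gain, the 2D-intermittency gain, and the (weak) dissipation fit together so that both $\mathring R_q$ and all $M_q^k$ decay while the limit still lies in a positive-regularity space $H^{\beta(\theta)}$ is the technical heart of the argument, and it is also what restricts the construction to $\theta<1$.
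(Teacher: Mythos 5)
Your proposal follows essentially the same route as the paper: reformulating $F$ into columns $F^k$ obeying MHD-type equations, a convex-integration iteration with a trace-free symmetric Reynolds stress and skew-symmetric stresses for each $F^k$, two geometric representation lemmas, 2D-intermittent building blocks of the form $\phi_\xi\varphi_{\xi_1}\xi$ and $\phi_\xi\varphi_{\xi_1}\xi_2$, incompressibility and temporal correctors, and a parameter balance that fails at $\theta=1$. The only slight imprecision is your description of the temporal correctors' role: in the paper they are not there to absorb time derivatives of the amplitude coefficients, but to cancel the high-frequency spatial part of the self-interaction (oscillation) error by trading $\xi\cdot\nabla\phi_\xi$ for $\mu^{-1}\partial_t\phi_\xi$, which is precisely what upgrades the admissible intermittency from 1D (as in the ideal MHD construction of Beekie--Buckmaster--Vicol) to 2D and hence pushes the dissipation range to all $\theta<1$.
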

	More precisely, we have the following result of h-principle type:
	\begin{thm}\label{main thm}
		For any given $\theta\in[0,1)$ and {$T>0$,} suppose $u$ is a vector field and $G$ is a tensor which are smooth on $[0,T]\times\mathbb{T}^3$ and satisfy
		\begin{equation*}
			\begin{aligned}
				\operatorname{div}u=0,~~~&\operatorname{div}G^T=0,\\
				\int_{\mathbb{T}^3}udx=0,~~~&\int_{\mathbb{T}^3}(G-\mathrm{Id})dx=0.
			\end{aligned}
		\end{equation*}
		Then, for any fixed $\varepsilon>0$, there exists a weak solution $(v,F)\in C([0,T];H^\beta(\mathbb{T}^3))$ of \eqref{elastic1} in the sense of Definition \ref{definition}, satisfying
		\begin{align}
			\label{thmL1}&\|v-u\|_{L_t^\infty L_x^1}+\|F-G\|_{L_t^\infty L_x^1}\leq\varepsilon,\\
			&\label{thmsupport}\operatorname{supp}_t(v,F)\subseteq O_\varepsilon (\operatorname{supp}_t(u,G)),
		\end{align}
		where the constant $\beta>0$ depends only on $\theta$, and we denote $\operatorname{supp}_t(v,F)=\operatorname{supp}_tv\cup\operatorname{supp}_tF$ and 
		\begin{equation*}
			O_\varepsilon(S):=\{t\in[0,T]: \exists s\in S,\ \text{such that }\  |s-t|\leq\varepsilon\}.
		\end{equation*}
	\end{thm}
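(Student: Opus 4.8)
The plan is to implement a Nash-type convex integration scheme adapted to the viscoelastic system \eqref{elastic1}. Starting from the smooth pair $(u,G)$, I would first define a suitable relaxation of the system: a family of approximate solutions $(v_q,F_q)$ together with Reynolds-type stress errors $(\mathring R_q^v, R_q^F)$, where $\mathring R_q^v$ is a symmetric traceless matrix measuring the failure of the momentum equation and $R_q^F$ a matrix (or triple of vectors) measuring the failure of the $F$-equation. The initial step $q=0$ should be a mollified, suitably damped version of $(u,G)$ supported in a slight temporal enlargement of $\operatorname{supp}_t(u,G)$, chosen so that both stresses are small in $C^0_{t,x}$. The iteration must produce, at each stage, perturbations $w_{q+1}^v$ and $w_{q+1}^F$ such that the new stresses satisfy $\|(\mathring R_{q+1}^v, R_{q+1}^F)\|_{L^1}\lesssim\delta_{q+2}$ for a super-exponentially decaying sequence $\delta_q$, while $\|(w_{q+1}^v,w_{q+1}^F)\|_{L^1}\lesssim\delta_{q+1}^{1/2}$ and $\|(w_{q+1}^v,w_{q+1}^F)\|_{H^\beta}\lesssim\lambda_{q+1}^\beta\delta_{q+1}^{1/2}$ stays summable for $\beta$ small depending on $\theta$; telescoping then gives convergence in $C_tH^\beta$ to a weak solution, with the $L^1$ bound \eqref{thmL1} from summing the perturbations and \eqref{thmsupport} from the fact that all perturbations inherit the temporal support of the stresses.

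The heart of the construction is the design of the building blocks. Because the stress $FF^T$ is a \emph{rank-one-structured} nonlinearity built from the columns of $F$, and the $F$-equation $\partial_t F + v\cdot\nabla F = \nabla v\,F$ has a transport--stretching structure, I expect the right ansatz to be \emph{2D intermittent} velocity and deformation profiles oscillating at frequency $\lambda_{q+1}$ — concentrated plane waves (or Mikado-type bundles that are sheets rather than tubes, reflecting the 2D intermittency advertised in the abstract) — of the schematic form $w^v_{q+1}=\sum_k a_k(x)\,\phi_k(\lambda_{q+1}x)\,\xi_k$ and $w^F_{q+1}=\sum_k b_k(x)\,\phi_k(\lambda_{q+1}x)\,\zeta_k$, with the vectors $\xi_k,\zeta_k$ and amplitudes $a_k,b_k$ chosen so that the low-frequency part of $w^v_{q+1}\otimes w^v_{q+1}-w^F_{q+1}(w^F_{q+1})^T$ cancels $\mathring R_q^v$ (this is a geometric lemma: one needs enough directions $\xi_k\otimes\xi_k-\zeta_k\otimes\zeta_k$ to span the symmetric traceless matrices while respecting $\operatorname{div}$-constraints), and simultaneously the interaction terms $\nabla w^v\,F_q + \nabla v_q\,w^F$ minus $v\cdot\nabla w^F$ produce, at low frequency, the cancellation of $R_q^F$. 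The algebraic compatibility of these two cancellation requirements — doing both with a \emph{single} family of oscillation directions, while keeping everything divergence-free — is the key structural point, and it is where the "inherent geometric structure of the viscoelastic equations" is used.

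Two technical ingredients then need care. First, the \emph{temporal correctors}: the principal perturbations will produce, through their self-interaction in $\partial_t$, high-frequency-in-time errors that a purely spatial inverse-divergence cannot absorb; following the Buckmaster--Vicol/Cheskidov--Luo philosophy one adds corrector fields $w^{t,v}_{q+1}, w^{t,F}_{q+1}$ solving appropriate $\partial_t$-balanced equations so that the oscillation error becomes a full spatial derivative amenable to the inverse-divergence operator, and one must check these correctors are lower-order in $L^1$. Second, the \emph{dissipative term} $(-\Delta)^\theta w^v_{q+1}$: its contribution to the new stress is $(-\Delta)^{\theta-1}$ applied to a frequency-$\lambda_{q+1}$ function, hence of size $\lesssim\lambda_{q+1}^{2\theta-1}\|w^v_{q+1}\|$, which is small precisely because $\theta<1$; this is where the restriction $\theta\in[0,1)$ and the dependence $\beta=\beta(\theta)$ enter, and the intermittency dimension (2D sheets rather than 1D tubes or full 3D waves) must be tuned against $\theta$ so that the $H^\beta$ norm stays summable while the stress bound holds. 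I expect the main obstacle to be exactly this simultaneous balancing — choosing the concentration/oscillation parameters so that the $F$-equation stress (which involves $\nabla w^v$, a full power of $\lambda_{q+1}$, with no smoothing gain) is controlled at the same time as the hypoviscous term in the momentum equation — and verifying that the geometric lemma supplies directions making both the $v$- and $F$-cancellations work; everything else (mollification, inverse divergence, gluing if needed, the iteration bookkeeping) is by now standard.
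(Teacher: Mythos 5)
Your broad outline matches the paper's strategy --- mollify, iterate with 2D-intermittent building blocks, use temporal correctors, exploit $\theta<1$ to absorb $(-\Delta)^{\theta}$, sum perturbations for $H^\beta$ convergence and the $L^1$/support bounds --- but the mechanism you propose for cancelling the $F$-equation stress is incorrect, and this is precisely where the paper's ``inherent geometric structure'' lives.

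You keep the $F$-equation in its raw transport--stretching form $\partial_t F + v\cdot\nabla F = \nabla v\,F$ and then argue that the high--low interaction terms $\nabla w^v\,F_q + \nabla v_q\,w^F - v\cdot\nabla w^F$ cancel $R_q^F$ at low frequency. This cannot work: those are products of a single high-frequency perturbation with a low-frequency background, so after projecting to low frequencies they vanish (the perturbations are mean-free at frequency $\gtrsim \lambda_{q+1}\sigma$). A low-frequency stress can only be cancelled by the resonant self-interaction of \emph{two} high-frequency objects. The paper's crucial structural step, which your proposal is missing, is to rewrite the $F$-equation column-by-column in conservation form,
\begin{equation*}
\partial_t F^i + \operatorname{div}\bigl(F^i\otimes v - v\otimes F^i\bigr)=0,
\end{equation*}
using $\operatorname{div} v = \operatorname{div} F^i = 0$. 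This puts the system in the same shape as ideal MHD: the $F$-stress $R^i_q$ is a \emph{skew-symmetric} matrix field, one introduces a skew-symmetric inverse divergence $\mathcal{R}^F$, and the cancellation of $R^i_\ell$ comes from the quadratic product $e^{i,p}_{q+1}\otimes w^p_{q+1} - w^p_{q+1}\otimes e^{i,p}_{q+1}$ of the two oscillating profiles. With $W_\xi=\phi_\xi\varphi_{\xi_1}\xi$ and $E_\xi=\phi_\xi\varphi_{\xi_1}\xi_2$ this product has low-frequency part $a_\xi^2(\xi_2\otimes\xi - \xi\otimes\xi_2)$, and a geometric lemma for \emph{skew-symmetric} matrices (a companion to the symmetric one you invoke) supplies enough directions $(\xi,\xi_2)$ to span the target. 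You also omit the coupling that this cancellation produces in the momentum equation: the low-frequency part of $e^{i,p}\otimes e^{i,p}$ contributes an extra symmetric matrix $R_F=\sum_i\sum_{\xi\in\Lambda_i}a_\xi^2(\xi\otimes\xi-\xi_2\otimes\xi_2)$ of the \emph{same} size as $R^v_\ell$, which must be folded into the velocity amplitudes $a_{\xi,v}$ rather than dumped into the new Reynolds stress. Without the reformulation you have neither the skew-symmetric anti-divergence, nor the right geometric lemma, nor the $R_F$ bookkeeping, so the iteration would not close.
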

	\begin{rem}
		By Theorem \ref{main thm}, system \eqref{elastic1} admits nontrivial weak solutions with compact temporal supports. This implies the non-uniqueness of weak solutions to \eqref{elastic1} in the sense of Definition \ref{definition}.
	\end{rem}
	When $\theta=0$, the system \eqref{elastic1} is referred as incompressible elastodynamics, which has similar structure {with the following 3D ideal magnetohydrodynamic (MHD) equations:}
	\begin{equation*}\
		\left\{
		\begin{aligned}
			&\partial_tu+\operatorname{div}(u\otimes u-B\otimes B)+\nabla p=0,\\
			&\partial_tB+\operatorname{div}(B\otimes u -u\otimes B)=0,\\
			&\operatorname{div} u=\operatorname{div} B=0,
		\end{aligned}
		\right.
	\end{equation*}
	where $u$ is the velocity field and $B$ the magnetic field. {Actually, let $F^k (k=1,2,3)$ denote the $k$-th column vector of $F$, then we can reformulate \eqref{elastic1} as
		\begin{equation}\label{elastic2}
			\left\{
			\begin{aligned}
				&\partial_tv+\operatorname{div}(v\otimes v-F^k\otimes F^k)+(-\Delta)^\theta v+\nabla p=0,\\
				&\partial_tF^i+\operatorname{div}(F^i\otimes v -v\otimes F^i)=0,\ \  i=1,2,3,\\
				&\operatorname{div} v=\operatorname{div} F^i=0.
			\end{aligned}
			\right.
		\end{equation}
		We find that even though the deformation tensor is a matrix, each column vector $F^i$ obeys similar equations with the magnetic field $B$ in MHD equations.}
	In \cite{Beekieweak}, Beekie et al. applied the convex integration scheme to construct $C_tL_x^2$ weak solutions to the ideal MHD equations, {using intermittent shear flows as building blocks. The shear flows have 1D spatial intermittency and thus only permits a viscosity term $(-\Delta)^\theta u$ with $\theta<\frac{3}{4}$.} In this paper, our scheme works for  $\theta<1$ by constructing new building blocks with 2D spatial intermittency {and introducing suitable temporal correctors}. Specially, our method can also be applied to MHD equations. Recently, after our work, Li et al. \cite{LZZ22} extended this to the full Lions exponent $\theta < \frac 54$ for viscous and resistive MHD equations by introducing an extra temporal intermittency.
	
	
	{The proof of Theorem \ref{main thm} builds on the scheme of} convex integration, which can be traced back to the work of Nash \cite{NashC1}. In \cite{NashC1}, Nash used an iteration scheme to construct $C^1$ isometric embeddings. Then, De Lellis and Sz\'{e}kelyhidi Jr. applied Nash's idea to fluid dynamics and devised in {\cite{LellisEuler,LellisDissi}} a ``convex integration" scheme leading to continuous dissipative solutions of the Euler equation, which was a significant step to solve the flexible part of Onsager's conjecture.
	Subsequently, after a series of advancements \cite{BuckAnomalous, IsettOn,Lellisdissi, BuckOnsager}, the Onsager's conjecture was finally resolved by Isett in \cite{IsettA}, {using a key ingredient by Daneri and
		Sz\'{e}kelyhidi Jr. \cite{DS17}. Then, Buckmaster et al. \cite{BDSV18} proved the same results for dissipative solutions. For the rigid part of Onsager's conjecture, one can refer to \cite{E94, ConstantinOnsager}.} The scheme has also been extended and adapted to various problems in mathematical physics, see \cite{CordobaLack, DaiNon, Beekieweak, BuckConvex, camillosurvey}  and references therein. 
	
	Furthermore, the convex integration techniques have fundamental analogies with the theory of turbulence, and features of turbulent flows (such as intermittency) have inspired researchers to develop and extend the convex integration constructions. Recently, the method was extended to the Navier-Stokes equations in \cite{bucknonuniqueness}, by constructing 3D intermittent Beltrami flows to treat the dissipative term. And more intermittent building blocks were adapted in the convex integration scheme, such as intermittent jets \cite{buckwild} and intermittent Mikado flows \cite{ModenaNon}. 
	
	{In this paper, we construct a new building block with 2D intermittency to adapt to the inherent geometric structure of viscoelastic equations \eqref{elastic2}. More precisely, we introduce the intermittent velocity and deformation flows of the following forms:
		$$W_{\xi} = \phi_{\xi}\varphi_{\xi_1}\xi, \quad  E_{\xi} = \phi_{\xi}\varphi_{\xi_1}\xi_2,$$
		where $(\xi,\xi_1,\xi_2)\subset \mathbb{Q}^3$ is an orthonormal basis, and $\phi_{\xi}, \varphi_{\xi_1}$  are spatial concentration functions with one oscillation direction $\xi,\xi_1$ respectively, see Section 2.3 for details. As mentioned in \cite{Beekieweak}, the previous building blocks (intermittent Beltrami flows, intermittent jets, respectively viscous eddies), are not applicable to system \eqref{elastic2}. The geometry of the nonlinear terms of system \eqref{elastic2} requires the building blocks’ direction of oscillation to be orthogonal to two direction vectors, only permitting the usage of 1D intermittency. To overcome this difficulty, we introduce two new types of temporal correctors to cancel the high spatial oscillations of nonlinear terms. }
	
	{The remainder of this paper} is organized as follows. In Section \ref{scheme} we introduce the convex integration scheme, present the main iteration scheme and give a proof of the Theorem \ref{main thm}. In Section \ref{pert} we define the perturbations and show the main estimates. Finally, we estimate the new stresses parts by parts and complete the iteration in Section \ref{eststress}.
	
	\paragraph{Notations:} Throughout the paper we use the following notations: $$\|f\|_{L^p}=\|f\|_{L_t^\infty L_x^p} \text{~for~} 1\leq p\leq\infty,$$ $$\|f\|_{C_{x,t}^N}=\sum_{0\leq n+|\zeta|\leq N}\|\partial_t^n D^\zeta f\|_{L^\infty},$$ 
	and 
	$$\|f\|_{W^{s,p}}=\|f\|_{L_t^\infty W_x^{s,p}} = \sum_{0\le |\zeta| \le s} \|D^\zeta f\|_{L_t^\infty L_x^{p}} ,$$ 
	where $\zeta=(\zeta_1,\zeta_2,\zeta_3)$ is the multi-index and $D^\zeta=\partial_{x_1}^{\zeta_1}\partial_{x_2}^{\zeta_2}\partial_{x_3}^{\zeta_3}$.
	
	For $f\in L_x^2(\mathbb{T}^3)$, we define the average integral operator
	$$\fint_{\mathbb{T}^3} f dx := \frac{1}{(2\pi)^3} \int f dx,$$
	and  the projection operator $$ \mathbb{P}_{\neq 0} f=f-\fint_{\mathbb{T}^{3}} f,$$  which projects a function onto its nonzero frequencies. We will write $A\lesssim B$ to denote that there exists a constant $C>0$ such that $A\leq CB$.
	
	\section{Convex Integration Scheme}\label{scheme}
	
	For every integer $q> 0$, we will construct a solution $(v_q,R^v_q,F^1_q,R^1_q,F^2_q,R^2_q,F^3_q,R^3_q)$ to the approximation system
	\begin{equation}\label{app elastic}
		\left\{\begin{aligned}
			&\partial_tv_q+\operatorname{div}(v_q\otimes v_q-F^k_q\otimes F^k_q)+(-\Delta)^\theta v_q+\nabla p_q=\operatorname{div}R^v_q,\\
			&\partial_tF^i_q+\operatorname{div}(F^i_q\otimes v_q-v_q\otimes F^i_q)=\operatorname{div}R^i_q,\quad\  i=1,2,3,\\
			&\operatorname{div} v_q=\operatorname{div} F^i_q=0,
		\end{aligned}\right.
	\end{equation}
	where the Reynolds stress $R^v_q$ is assumed to be a trace-free symmetric matrix, and $R^i_q$ is assumed to be skew-symmetric. {Besides,} $\|R^v_q\|_{L^1}$ and $\|R^i_q\|_{L^1}$ go to zero as $q\rightarrow \infty$, and $(v_q,F^1_q,F^2_q,F^3_q)$ will converge to a weak solution of \eqref{elastic2}.
	
	For $\theta\in [0,1)$ given in system \eqref{elastic1}, we denote
	\begin{equation}\label{theta}
		\theta_\ast=\left\{
		\begin{aligned}
			& 2\theta-1, & \frac{1}{2}<\theta<1; \\
			& 0, & 0\leq\theta\leq\frac{1}{2}.
		\end{aligned}
		\right.
	\end{equation}
	Then, we shall {fix} a parameter $\alpha { \in \mathbb{Q}}$ satisfying
	\begin{equation}\label{eq:alpha}
		0<\alpha\leq\frac{1-\theta_\ast}{8}\in \left(0,\operatorname{min}\left\{\frac{1-\theta}{4},\frac{1}{8}\right\}\right].
	\end{equation}
	In order to quantify the convergence of the stresses, we introduce a frequency parameter $\lambda_q$ and an amplitude parameter $\delta_q$ defined as follows:
	\begin{equation}\label{defpara}
		\lambda_q=a^{b^q},~~~~~~\delta_q=\lambda_1^{3\beta}\lambda_q^{-2\beta},
	\end{equation}
	where a large parameter $b\in\mathbb{N}$ and a small parameter $\beta>0$ would be {fixed} such that
	\begin{equation}\label{para ineq}
		800<\alpha b\in \mathbb{N} , ~~~\beta b^2\leq \frac{1}{16},
	\end{equation}
	and $a\gg 1$ will be chosen as a sufficiently large multiple of a geometric constant $N_\Lambda\in\mathbb{N}$ (which is fixed in Remark \ref{geometric const}).
	
	By induction, we will assume the following estimates on the solution of \eqref{app elastic} at level $q$:
	\begin{align}
		\label{STRESS L1}&\sum_{i=1}^3\|R^i_q\|_{L^1}\leq \delta_{q+1},~~~\|R^v_q\|_{L^1}\leq \delta_{q+1},\\
		\label{STRESS C1}&\|R^i_q\|_{C_{x,t}^1}+\|R^v_q\|_{C_{x,t}^1}\leq \lambda_{q}^{10},\\
		\label{C1norm}&\sum_{i=1}^3\|F^i_q\|_{C_{x,t}^1}+\|v_q\|_{C_{x,t}^1}\leq \lambda_{q}^4.
	\end{align}
	\begin{prop}[Main Iteration]\label{main iteration}
		There exist a universal constant $M$ and a sufficiently large parameter $a=a(\theta, b, \beta)$ such that the following holds: let $(v_q, R_q^v, F^i_q, R_q^i)$ be a solution of \eqref{app elastic} satisfying the inductive estimates \eqref{STRESS L1}-\eqref{C1norm}, {then there exist functions $(v_{q+1}, R_{q+1}^v, F^i_{q+1}, R_{q+1}^i)$ solving \eqref{app elastic} and satisfying \eqref{STRESS L1}-\eqref{C1norm} with $q$ replaced by $q+1$. Furthermore, we have}
		\begin{align}\label{iteration L2}
			&\|v_{q+1}-v_q\|_{L^2}+\sum_{i=1}^3\|F^i_{q+1}-F^i_q\|_{L^2}\leq M\delta_{q+1}^\frac{1}{2},\\
			&\|v_{q+1}-v_q\|_{L^1}+\sum_{i=1}^3\|F^i_{q+1}-F^i_q\|_{L^1}\leq \delta_{q+2},\label{iteration L1}\\
			&\bigcup_{i=1}^3\operatorname{supp}_t(v_{q+1},R^v_{q+1},F^{i}_{q+1},R^i_{q+1})\subseteq \bigcup_{i=1}^3O_{\delta_{q+2}}\left(\operatorname{supp}_t(v_{q},R^v_{q},F^{i}_{q},R^i_{q})\right).\label{itersupp}
		\end{align}
	\end{prop}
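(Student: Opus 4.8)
\emph{Proof plan.} The argument is one step of the convex integration scheme; the quantitative estimates are carried out in Sections \ref{pert} and \ref{eststress}, and I only outline the structure here. First I would fix a small mollification length $\ell$ with $\lambda_q^{-1}\ll\ell\ll\lambda_{q+1}^{-1}$ (a suitable power of $\lambda_{q+1}$), together with the spatial concentration parameters of the building blocks and a temporal oscillation parameter for the correctors, all chosen as powers of $\lambda_{q+1}$ compatible with \eqref{theta}, \eqref{eq:alpha} and \eqref{para ineq}. Mollifying $(v_q,R_q^v,F_q^i,R_q^i)$ in space and time at scale $\ell$ yields smooth $(v_\ell,R_\ell^v,F_\ell^i,R_\ell^i)$ solving \eqref{app elastic} up to an explicit mollification error, with $\|R_\ell^v-R_q^v\|_{L^1}$ and $\|R_\ell^i-R_q^i\|_{L^1}$ negligible against $\delta_{q+1}$, and with $C^1_{x,t}$-norms controlled by a fixed power of $\lambda_q$ by \eqref{STRESS C1}--\eqref{C1norm}.

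Next I would build the perturbation. Following the introduction, set the principal parts $w_{q+1}^{(p)}=\sum_\xi a_\xi W_\xi$ and $f_{q+1}^{(p),i}=\sum_\xi b_\xi^i E_\xi$, where the finite sum runs over rational directions $\xi$, the blocks $W_\xi=\phi_\xi\varphi_{\xi_1}\xi$ and $E_\xi=\phi_\xi\varphi_{\xi_1}\xi_2$ are concentrated and oscillate at frequency $\sim\lambda_{q+1}$, and the amplitudes $a_\xi,b_\xi^i$ are constructed from $\delta_{q+1}$ and the mollified stresses $R_\ell^v,R_\ell^i$ via a geometric lemma, so that the low-frequency parts of the quadratic self-interactions of the blocks reproduce $-R_\ell^v$, respectively $-R_\ell^i$. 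To this I would add incompressibility correctors $w_{q+1}^{(c)},f_{q+1}^{(c),i}$ making the perturbations divergence-free, and, crucially, temporal correctors $w_{q+1}^{(t)},f_{q+1}^{(t),i}$: because the geometry of \eqref{elastic2} forces the oscillation direction of the blocks to be orthogonal to two vectors, part of $\operatorname{div}(w_{q+1}^{(p)}\otimes w_{q+1}^{(p)}-f_{q+1}^{(p),k}\otimes f_{q+1}^{(p),k})$ (and of the corresponding term in the $F$-equations) cannot be written as the divergence of a small tensor, and the temporal correctors are designed exactly so that $\partial_t w_{q+1}^{(t)}$ and $\partial_t f_{q+1}^{(t),i}$ cancel this problematic piece. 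I then set $v_{q+1}=v_\ell+w_{q+1}$ and $F_{q+1}^i=F_\ell^i+f_{q+1}^i$ with $w_{q+1}=w_{q+1}^{(p)}+w_{q+1}^{(c)}+w_{q+1}^{(t)}$ and likewise for $f_{q+1}^i$.

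For the estimates: the $L^2$ normalization of $W_\xi,E_\xi$ gives \eqref{iteration L2} after choosing $\beta$; the smallness of their $L^1$ norm from the 2D concentration gives \eqref{iteration L1}; the derivative cost ($\lambda_{q+1}$ per spatial derivative and a power of $\lambda_{q+1}$ per temporal derivative), combined with $\lambda_{q+1}=\lambda_q^b$ and \eqref{para ineq}, gives \eqref{C1norm} at level $q+1$ with room to spare (one needs only $\lambda_{q+1}^4$). Since $a_\xi,b_\xi^i$ are supported in $O_\ell$ of $\operatorname{supp}_t$ of the level-$q$ objects, \eqref{itersupp} follows once $\ell\le\delta_{q+2}$.

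Finally I would define $R_{q+1}^v$ and $R_{q+1}^i$ by collecting all error terms --- the linear error $(-\Delta)^\theta w_{q+1}$ and the transport-type errors, the mollification error, the Nash interaction of $w_{q+1}$ with $\nabla v_\ell$ and $\nabla F_\ell^i$, the oscillation error from the high--high block interactions surviving the geometric lemma, and the corrector errors (including those generated by the temporal correctors) --- and invert the divergence with a trace-free symmetric operator $\mathcal{R}$ for $R_{q+1}^v$ and its skew-symmetric analogue for $R_{q+1}^i$. The improved $L^p$ bounds for $p$ near $1$ coming from the 2D intermittency, together with the frequency gain $(\ell\lambda_{q+1})^{-1}$, give \eqref{STRESS L1} at level $q+1$; the crude bound \eqref{STRESS C1} at level $q+1$ follows by brute-force differentiation, since one only needs $\lambda_{q+1}^{10}$. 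The main obstacle is that for $\theta$ close to $1$ the hypoviscous error is large (of order $\lambda_{q+1}^{2\theta}$ times the post-inverse-divergence $L^p$ size of $w_{q+1}$), while the oscillation error pushes towards maximal concentration and the temporal correctors contribute their own errors; the delicate point, handled parts by parts in Section \ref{eststress}, is to choose the concentration and temporal oscillation parameters so that the dissipative error, the oscillation error, the temporal-corrector error and the Nash error are \emph{simultaneously} $\le\delta_{q+2}$ --- which is precisely where the restriction $\theta<1$ encoded in \eqref{theta}--\eqref{eq:alpha} enters --- and verifying that the new temporal correctors remain $O(\delta_{q+1}^{1/2})$ in $L^2$ and produce only controllable new errors is the crux of the construction.
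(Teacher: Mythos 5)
Your proposal follows the same architecture as the paper (mollification, intermittent 2D blocks, incompressibility and temporal correctors, inverse divergence on the remaining errors), but there are two structural points it glosses over that are in fact essential for the scheme to close, plus a small error in the stated mollification constraint.

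First, the constraint $\lambda_q^{-1}\ll\ell\ll\lambda_{q+1}^{-1}$ is written backwards and is in fact self-contradictory (since $\lambda_{q+1}^{-1}\ll\lambda_q^{-1}$). The correct ordering is $\lambda_{q+1}^{-1}\ll\ell\ll\lambda_q^{-1}$; the paper takes $\ell=\lambda_q^{-20}$, which is compatible with this after $b$ is fixed large. If $\ell$ were $\lesssim\lambda_{q+1}^{-1}$, the factors $\ell^{-3N}$ in the amplitude bounds \eqref{aCN} would swamp everything.

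Second, and more importantly, you present independent amplitudes $a_\xi$ (velocity) and $b_\xi^i$ (deformation) and describe them as "constructed so that the low-frequency self-interactions reproduce $-R_\ell^v$, respectively $-R_\ell^i$". This misses the coupling that is the heart of the viscoelastic structure. For $\xi\in\Lambda_i$ the low-frequency part of $e_{q+1}^{i,p}\otimes w_{q+1}^p - w_{q+1}^p\otimes e_{q+1}^{i,p}$ is $a_\xi b_\xi^i(\xi_2\otimes\xi-\xi\otimes\xi_2)$, so the skew-symmetric geometric lemma forces you to take $b_\xi^i=a_\xi$. Once you do, the velocity self-interaction from the $\Lambda_i$-blocks in $w_{q+1}^p\otimes w_{q+1}^p - e_{q+1}^{i,p}\otimes e_{q+1}^{i,p}$ leaves behind the symmetric matrix $R_F=\sum_i\sum_{\xi\in\Lambda_i}a_\xi^2(\xi\otimes\xi-\xi_2\otimes\xi_2)$, which is $O(\delta_{q+1})$ in $L^1$ — the same size as $R_\ell^v$, far too large to dump into the new Reynolds stress. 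The paper handles this by defining $\rho_v$ through $\chi(\delta_{q+1}^{-1}|R_\ell^v+R_F|)$ and feeding $R_\ell^v+R_F$ (not just $R_\ell^v$) into the $\Lambda_v$-amplitudes \eqref{cancelRv}. Your outline does not account for $R_F$, and without this absorption the velocity oscillation error would not satisfy \eqref{STRESS L1}. The rest of your plan (temporal correctors killing the $\xi\cdot\nabla\phi_\xi^2$ piece via \eqref{phi t}, the $(\lambda_{q+1}\sigma)^{-1}$ gain from Lemma \ref{gain}, and the parameter choices \eqref{para def} balancing the dissipative term $\lambda_{q+1}^{\theta_*}$ against the oscillation and corrector errors) matches the paper.
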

	\subsection{Proof of Theorem \ref{main thm}}
	{Following \cite{Beekieweak}, we introduce the symmetric inverse divergence operator $\mathcal{R}$ and skew-symmetric inverse divergence operator $\mathcal{R}^F$ by
		\begin{equation*}
			\begin{aligned}
				&(\mathcal{R} v)_{k l}=\partial_{k} \Delta^{-1} v^{l}+\partial_{l} \Delta^{-1} v^{k}-\frac{1}{2}\left(\delta_{k l}+\partial_{k} \partial_{l} \Delta^{-1}\right) \operatorname{div} \Delta^{-1} v,\\
				&(\mathcal{R}^Ff)_{ij}:=\epsilon_{ijk}(-\Delta)^{-1}(\operatorname{curl}f)_k,
			\end{aligned}
		\end{equation*}
		where $i,j,k,l\in\{1,2,3\}$, $\epsilon_{ijk}$ is the Levi-Civita tensor, and functions $v,f: \mathbb{R}^3\rightarrow\mathbb{R}^3$ satisfies $\int_{\mathbb{T}^3} v=0$ and $\operatorname{div}f=0$ respectively. By a direct calculation we see that $\operatorname{div}\mathcal{R}(v)=v$ and $\operatorname{div}\mathcal{R}^F(f)=f$. The operator $\mathcal{R}$ returns a symmetric, trace-free matrix, and $\mathcal{R}^F$ returns a skew-symmetric matrix. 
		By standard Calderon-Zygmund and Schauder estimates we have
		\begin{equation}\label{est antidiv}
			\begin{aligned}
				&\|\mathcal{R}\|_{L^{p} \rightarrow W^{1, p}} \lesssim 1, &\quad\|\mathcal{R}\|_{C^{0} \rightarrow C^{0}} \lesssim 1, &\quad\left\|\mathcal{R} \mathbb{P}_{\not=0} u\right\|_{L^{p}} \lesssim\left\||\nabla|^{-1} \mathbb{P}_{\neq 0} u\right\|_{L^{p}},
			\end{aligned}
		\end{equation}
		for $1<p<\infty$. 
		The above estimates also hold for $\mathcal{R}^F$.}
	
	Now we turn to the proof of Theorem \ref{main thm}.	For $q=0$, take $v_0=u$, $F^i_0=G^i$, and define $p_0, R_0^v, R_0^i$ as
	\begin{equation*}
		\begin{aligned}
			p_0&={-\frac{1}{3}}|v_0|^2+{\frac{1}{3}}\sum_{i=1}^3|F^i_0|^2,\\
			R_0^v&=\mathcal{R}\left(\partial_t v_0+(-\Delta)^\theta v_0\right)+v_0\otimes v_0-F^i_0\otimes F^i_0 {+ p_0 \mathrm{Id}},\\
			R_0^i&=\mathcal{R}^F(\partial_t F^i_0)+F^i_0\otimes v_0- v_0\otimes F^i_0.\\
		\end{aligned}
	\end{equation*}
	Then for $a$ large enough, $(v_0, R_0^v, F^1_0,R^1_0,F^2_0,R^2_0,F^3_0,R^3_0)$ satisfies system \eqref{app elastic} and obeys the estimates \eqref{STRESS L1}-\eqref{C1norm}. For $q\geq 1$,  we inductively apply Proposition \ref{main iteration} to get a sequence of solutions $(v_q,R^v_q,F^1_q,R^1_q,F^2_q,R^2_q,F^3_q,R^3_q)$ satisfying the inductive estimates \eqref{STRESS L1}-\eqref{C1norm}. Using the bound \eqref{iteration L2} and interpolation, we obtain
	\begin{equation*}
		\begin{aligned}
			&\sum_{q\geq 0}\|v_{q+1}-v_q\|_{H^{\beta'}}+\sum_{q\geq 0}\sum_{i=1}^3\|F^i_{q+1}-F^i_q\|_{H^{\beta'}}\\
			&\quad\leq\sum_{q\geq 0}\|v_{q+1}-v_q\|_{L^2}^{1-\beta'}\|v_{q+1}-v_q\|_{H^1}^{\beta'}+\sum_{q\geq 0}\sum_{i=1}^3\|F^i_{q+1}-F^i_q\|_{L^2}^{1-\beta'}\|F^i_{q+1}-F^i_q\|_{H^1}^{\beta'}\\
			&\quad\lesssim\sum_{q\geq 0}\delta_{q+1}^\frac{1-\beta'}{2}\lambda_{q+1}^{4\beta'}={\lambda_1^{\frac{3\beta(1-\beta')}{2}}}\sum_{q\geq 0}\lambda_{q+1}^{-\beta(1-\beta')+4\beta'}\lesssim 1,
		\end{aligned}
	\end{equation*}
	for $\beta'<\frac{\beta}{4+\beta}$. Hence the solution sequence is Cauchy and there exists a limit $(v, F^1, F^2, F^3)=\lim_{q\rightarrow\infty}(v_q, F_q^1, F_q^2, F_q^3)$, which is a weak solution of \eqref{elastic2} because $\lim_{q\rightarrow\infty}R_q^v=\lim_{q\rightarrow\infty}R_q^i=0$ in $C([0,1]; L^{1}(\mathbb{T}^3))$.
	Finally, the estimate \eqref{thmL1},  \eqref{thmsupport} are  direct results of \eqref{iteration L1} and \eqref{itersupp}. This completes the proof of Theorem \ref{main thm}.
	\subsection{Geometric Lemmas}
	The idea of the construction of perturbations mainly comes from the following two lemmas, with which we can cancel the previous stress by the low frequency of quadratic terms. One may refer to \cite{Beekieweak} for the proof of the lemmas.
	\begin{lem}\label{Representation of symmetric matrices}(Representation of symmetric matrices)There exists a constant $\varepsilon_v>0$ and a finite set $\Lambda_v\subset\mathbb{S}^2\cap\mathbb{Q}^3$ consisting of vectors $\xi$ with associated orthonormal basis $(\xi,\xi_1,\xi_2)$, such that for each symmetric trace-free matrix $R\in B_{\varepsilon_v}(\mathrm{Id})$ we have the identity
		\begin{equation*}
			R=\frac{1}{2}\sum_{\xi\in \Lambda_v}(\gamma_\xi(R))^2(\xi\otimes\xi),
		\end{equation*}
		where $\gamma_\xi\in C^{\infty}(B_{\varepsilon_v}(\mathrm{Id})), \xi\in \Lambda_v$.
	\end{lem}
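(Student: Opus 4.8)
The plan is to obtain the decomposition from the implicit function theorem, after first choosing $\Lambda_v$ so that the symmetrised rank-one tensors $\{\xi\otimes\xi:\xi\in\Lambda_v\}$ span the $6$-dimensional space $\mathcal{S}$ of symmetric $3\times 3$ matrices, and so that $\mathrm{Id}$ admits a representation $\tfrac12\sum_{\xi\in\Lambda_v}\bar c_\xi^2\,\xi\otimes\xi$ with all $\bar c_\xi>0$, to be used as a base point around which the $\gamma_\xi$ are built.

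First I would construct $\Lambda_v$. I would use that the rational orthogonal group $O(3,\mathbb{Q})$ is dense in $O(3)$ — for instance the Cayley transform $A\mapsto(\mathrm{Id}-A)(\mathrm{Id}+A)^{-1}$ turns rational skew-symmetric matrices into rational rotations and rationally parametrises a Zariski-dense subset of $SO(3)$ — and that $O(3,\mathbb{Q})$ contains the group $\mathcal{G}$ of signed permutation matrices. The first columns of matrices in $O(3,\mathbb{Q})$ are then dense in $\mathbb{S}^2$, and each such column $\xi$ comes equipped with a rational orthonormal frame $(\xi,\xi_1,\xi_2)$, namely the remaining two columns. Since for any proper subspace $V\subsetneq\mathcal{S}$ the set $\{\xi\in\mathbb{S}^2:\xi\otimes\xi\in V\}$ is a proper algebraic subset of $\mathbb{S}^2$, I can select, one at a time, six such columns $\eta^{(1)},\dots,\eta^{(6)}$ whose tensor squares form a basis of $\mathcal{S}$, and then set $\Lambda_v:=\mathcal{G}\cdot\{\eta^{(1)},\dots,\eta^{(6)}\}\subset\mathbb{S}^2\cap\mathbb{Q}^3$ (discarding antipodal repetitions). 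This $\Lambda_v$ still spans $\mathcal{S}$, and $T:=\sum_{\xi\in\Lambda_v}\xi\otimes\xi$ commutes with every $g\in\mathcal{G}$ because $g\Lambda_v=\Lambda_v$; a symmetric matrix commuting with all diagonal sign flips is diagonal, and one commuting with all coordinate transpositions has equal diagonal entries, so $T=c\,\mathrm{Id}$ with $c=\operatorname{tr}T/3=|\Lambda_v|/3>0$, i.e.
\[
\mathrm{Id}=\frac{3}{|\Lambda_v|}\sum_{\xi\in\Lambda_v}\xi\otimes\xi .
\]
The common denominator of the coordinates of all $\xi,\xi_1,\xi_2$ with $\xi\in\Lambda_v$ then yields the geometric integer $N_\Lambda$ of Remark~\ref{geometric const}.

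Next I would set up the implicit function theorem. Consider $\Phi:\mathbb{R}^{\Lambda_v}\to\mathcal{S}$, $\Phi\big((c_\xi)\big)=\tfrac12\sum_{\xi\in\Lambda_v}c_\xi^2\,\xi\otimes\xi$. With $\bar c_\xi:=(6/|\Lambda_v|)^{1/2}$ the displayed identity gives $\Phi\big((\bar c_\xi)\big)=\mathrm{Id}$, and the differential $D\Phi\big((\bar c_\xi)\big)\colon(\delta_\xi)\mapsto\sum_{\xi\in\Lambda_v}\bar c_\xi\,\delta_\xi\,\xi\otimes\xi$ is surjective onto $\mathcal{S}$ exactly because the $\xi\otimes\xi$ span $\mathcal{S}$. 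Applying the inverse function theorem to $\Phi$ restricted to a suitable $6$-dimensional affine slice through $(\bar c_\xi)$, I get $\varepsilon_v>0$ and functions $\gamma_\xi\in C^\infty(B_{\varepsilon_v}(\mathrm{Id}))$ (the ball understood in $\mathcal{S}$) with $\gamma_\xi(\mathrm{Id})=\bar c_\xi$ and $R=\tfrac12\sum_{\xi\in\Lambda_v}(\gamma_\xi(R))^2\,\xi\otimes\xi$ for every $R\in B_{\varepsilon_v}(\mathrm{Id})$, after shrinking $\varepsilon_v$ so that $\gamma_\xi>0$ throughout. This is the asserted identity; in the iteration it is invoked on a renormalisation of the Reynolds stress that has been arranged to lie near $\mathrm{Id}$.

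The hard part is not the analysis — once $\Lambda_v$ is fixed, everything reduces to surjectivity of a linear map plus the inverse function theorem — but the Diophantine bookkeeping in the choice of $\Lambda_v$: one must simultaneously keep the directions \emph{and} their completing frames rational, since this is what makes the building blocks $W_\xi,E_\xi$ periodic on $\mathbb{T}^3$ after rescaling by a multiple of $N_\Lambda$, and keep the tensor squares $\xi\otimes\xi$ spanning $\mathcal{S}$, since this is what makes the decomposition well-defined and smooth on a full neighbourhood of $\mathrm{Id}$. A naive choice fails, because a rational $\xi\in\mathbb{S}^2$ need not admit any rational orthonormal completion — e.g.\ no rational unit vector lies in the plane $x_1+x_2+x_3=0$ — which forces the selection to stay inside the columns of $O(3,\mathbb{Q})$. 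The two requirements are nonetheless compatible, since spanning is an open condition and $O(3,\mathbb{Q})$ is dense in $O(3)$, and the signed-permutation closure supplies the positive base-point representation of $\mathrm{Id}$ for free.
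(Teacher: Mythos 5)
Your argument is correct and is the standard mechanism behind the reference the paper cites for this lemma (the paper itself gives no proof): choose $\Lambda_v\subset\mathbb{S}^2\cap\mathbb{Q}^3$ with rational completing frames so that the tensor squares $\xi\otimes\xi$ span the symmetric $3\times 3$ matrices and represent $\mathrm{Id}$ with strictly positive weights, then apply the inverse function theorem on a $6$-dimensional slice through that base point and shrink $\varepsilon_v$ to preserve positivity of the $\gamma_\xi$. The $\mathcal{G}$-averaging device for producing the positive representation of $\mathrm{Id}$ is clean, and you are right to quietly drop the word ``trace-free'' in the hypothesis, which is vacuous near $\mathrm{Id}$; in the application the lemma is evaluated at $\mathrm{Id}-(R^v_\ell+R_F)/\rho_v$, a general symmetric matrix near $\mathrm{Id}$.

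One factual error in your closing remarks, though it does not affect the proof. The claim that a rational $\xi\in\mathbb{S}^2$ ``need not admit any rational orthonormal completion'' is false: for $\xi\in\mathbb{S}^2\cap\mathbb{Q}^3$ with $\xi\neq e_1$, the Householder reflection $H=\mathrm{Id}-\frac{2vv^T}{|v|^2}$ with $v=\xi-e_1$ is a rational orthogonal matrix sending $e_1$ to $\xi$, so $(He_1,He_2,He_3)$ is a rational orthonormal frame extending $\xi$ (for $\xi=e_1$ use the standard frame). Your plane $\{x_1+x_2+x_3=0\}$ indeed contains no rational unit vector — descent on $2(p^2+pq+q^2)=n^2$ forces $p=q=n=0$ — but it is the orthogonal complement of $(1,1,1)/\sqrt{3}$, which is not a rational unit vector, so it exhibits no rational $\xi$ lacking a rational completion. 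Restricting to columns of $O(3,\mathbb{Q})$ is therefore a convenience, not a necessity; the construction you give is fine, but the stated reason for it is wrong.
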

	\begin{lem}\label{Representation of skew-symmetric matrices}(Representation of skew-symmetric matrices) For every $N\in\mathbb{N}$, there exists a constant $\varepsilon_F>0$ and pairwise disjoint sets $\Lambda_i\subset\mathbb{S}^2\cap\mathbb{Q}^3$, $i=1,2,\ldots,N$, consisting of vectors $\xi$ with associated orthonormal basis $(\xi,\xi_1,\xi_2)$,  such that for each skew-symmetric matrix $R\in B_{\varepsilon_F}(0)$ we have the identity
		\begin{equation*}
			R=\sum_{\xi\in \Lambda_i}(\gamma_\xi(R))^2(\xi_2\otimes\xi-\xi\otimes\xi_2),
		\end{equation*}
		for each $i=1,2,\dots,N$. Here $\gamma_\xi\in C^{\infty}(B_{\varepsilon_F}(0)), \xi\in \Lambda_i$.
	\end{lem}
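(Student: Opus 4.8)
The plan is to dualise, turning the matrix identity into a statement about positively spanning a neighbourhood of the origin in $\mathbb{R}^3$. Let $\mathfrak{so}(3)$ be the three–dimensional space of skew-symmetric $3\times 3$ real matrices, and let $\mathrm{vec}\colon\mathfrak{so}(3)\to\mathbb{R}^3$ be the linear Hodge isomorphism determined by $M_{ij}=\epsilon_{ijk}(\mathrm{vec}\,M)_k$; a direct computation gives $\mathrm{vec}(a\otimes b-b\otimes a)=a\times b$ for $a,b\in\mathbb{R}^3$. Insisting that every associated orthonormal basis $(\xi,\xi_1,\xi_2)$ be positively oriented, one then has $\mathrm{vec}(\xi_2\otimes\xi-\xi\otimes\xi_2)=\xi_2\times\xi=\xi_1$. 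Since $\mathrm{vec}$ is a linear isomorphism with $\|R\|\simeq|\mathrm{vec}\,R|$, the desired identity is, writing $r=\mathrm{vec}\,R$, equivalent to
\begin{equation*}
	r=\sum_{\xi\in\Lambda_i}\bigl(\gamma_\xi(R)\bigr)^2\,\xi_1
\end{equation*}
for all $r$ in a small ball about $0$ in $\mathbb{R}^3$. Hence it suffices, for each $i=1,\dots,N$, to produce a finite family of positively oriented rational orthonormal frames $(\xi,\xi_1,\xi_2)$ — with the distinguished vectors $\xi$ pairwise distinct across all families — such that the legs $\{\xi_1\}_{\xi\in\Lambda_i}$ have the origin in the interior of their convex hull, together with smooth nonnegative coefficient functions realising the displayed identity.

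For the construction I would rely on the classical fact that the rational rotation group $SO(3,\mathbb{Q})$ is dense in $SO(3)$ (e.g.\ via the Cayley transform of rational skew-symmetric matrices); applying a rational rotation to the standard frame then shows that the set of $\xi\in\mathbb{S}^2\cap\mathbb{Q}^3$ occurring as the distinguished vector (or as any other leg) of a positively oriented rational orthonormal frame is dense in $\mathbb{S}^2$. Fix first a single good family: choose $m\ge 4$ positively oriented rational orthonormal frames $(\xi^{(j)},\xi^{(j)}_1,\xi^{(j)}_2)$, $1\le j\le m$, with the $\xi^{(j)}$ pairwise distinct and such that $d_j:=\xi^{(j)}_1$ satisfy $0\in\mathrm{int}\,\mathrm{conv}\{d_1,\dots,d_m\}$; such a family exists because positive spanning is an open condition (perturb, say, the four vertex directions of a regular tetrahedron to suitable rational points) and because there are infinitely many rational frames available. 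Now $0\in\mathrm{int}\,\mathrm{conv}\{d_j\}$ yields $\bar a\in\mathbb{R}^m$ with all $\bar a_j>0$ and $\sum_j\bar a_jd_j=0$; since $\{d_j\}$ spans $\mathbb{R}^3$, the surjection $T\colon\mathbb{R}^m\to\mathbb{R}^3$, $Ta=\sum_ja_jd_j$, has a linear right inverse $S$, so $a_j(r):=\bar a_j+(Sr)_j$ is affine in $r$, obeys $\sum_ja_j(r)d_j=r$, and is strictly positive whenever $|r|<\varepsilon$ for some $\varepsilon>0$ depending only on the family. Then $\gamma_{\xi^{(j)}}(R):=\sqrt{a_j(\mathrm{vec}\,R)}$ is $C^\infty$ on the ball $\{\|R\|<\varepsilon'\}\subset\mathfrak{so}(3)$ (a composition of the linear map $\mathrm{vec}$, the positive affine map $a_j$, and $\sqrt{\,\cdot\,}$ on $(0,\infty)$), and by the reduction of the first paragraph it realises the asserted identity for this family. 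Finally, to obtain $N$ pairwise disjoint families sharing a single radius, pick $N$ distinct, sufficiently generic $Q_1,\dots,Q_N\in SO(3,\mathbb{Q})$ and let the $i$-th family be $\{(Q_i\xi^{(j)},Q_i\xi^{(j)}_1,Q_i\xi^{(j)}_2)\}_{j=1}^m$; each $Q_i$ preserves orientation, rationality and the condition $0\in\mathrm{int}\,\mathrm{conv}\{\cdot\}$, and is an isometry, so the construction above applies to each family with the same $\varepsilon$. A generic choice of the $Q_i$ makes all the $Q_i\xi^{(j)}$ pairwise distinct, so the sets $\Lambda_i:=\{Q_i\xi^{(j)}:1\le j\le m\}$ are pairwise disjoint; set $\varepsilon_F:=\varepsilon$.

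The one genuinely delicate ingredient is the arithmetic point used above — that one can find enough rational unit vectors which simultaneously carry a rational orthonormal frame and can be positioned so that the relevant legs positively span $\mathbb{R}^3$ — which rests on the density of $SO(3,\mathbb{Q})$ in $SO(3)$ together with the openness of the positive-spanning (convex-hull) condition. Everything else is routine: the linear-algebra construction of the right inverse $S$, the smoothness of $t\mapsto\sqrt{t}$ on $(0,\infty)$ which places each $\gamma_\xi$ in $C^\infty(B_{\varepsilon_F}(0))$, and the translation of the vector identity back into the matrix identity through $\mathrm{vec}$. This is precisely the scheme behind the analogous lemma in \cite{Beekieweak}.
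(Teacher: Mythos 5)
Your reduction via the Hodge isomorphism $\mathrm{vec}\colon\mathfrak{so}(3)\to\mathbb{R}^3$ (so that $\xi_2\otimes\xi-\xi\otimes\xi_2\mapsto\xi_1$ for a positively oriented frame), followed by positive spanning of $\mathbb{R}^3$ by the legs $\xi_1$, an affine right-inverse to build strictly positive coefficients, and a square root to produce the smooth $\gamma_\xi$, is correct and is essentially the same argument as in the cited reference \cite{Beekieweak}, to which the paper defers for the proof. The one small item worth making fully explicit (which you gesture at via density of $SO(3,\mathbb{Q})$) is that you must select the frames so that, after rotation by the $Q_i$, not only are all distinguished $\xi$'s distinct across families, but the additional genericity properties recorded in Remark \ref{geometric const} (no parallel vectors across sets, distinct $\xi_1$'s) can also be arranged; this is again a generic-avoidance argument and does not affect the core of your proof.
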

	\begin{rem}\label{geometric const}
		We choose $\Lambda_v$ and $\Lambda_i (i=1,2,3)$ such that there do not exist two parallel vectors in two different sets, and for two vectors $\xi\neq\xi'$,  the associated orthonormal bases satisfy $\xi_1\neq\xi'_1$. For convenience, we denote $\Lambda=\bigcup_{i=1}^3\Lambda_i\cup\Lambda_v$. By our choice, there exists a constant $N_\Lambda\in\mathbb{N}$ such that
		\begin{equation*}
			\{N_\Lambda\xi,N_\Lambda\xi_1,N_\Lambda\xi_2\}\subset \mathbb{Z}^3, ~\xi\in \Lambda.
		\end{equation*}
		
	\end{rem}
	\subsection{Intermittent Flow}
	Let $\Psi:\mathbb{R}\rightarrow\mathbb{R}$ be a smooth cutoff function supported on the interval $[-1,1]$. Assume it is normalized in such a way that $\phi:=-\frac{d^2}{dx^2}\Psi$ satisfies
	\begin{equation*}
		\int_\mathbb{R}\phi^2(x)dx=2\pi.
	\end{equation*}
	For parameters $0<\sigma \ll r \ll 1$, we define the rescaled functions
	\begin{equation*}
		\phi_{r}(x):=\frac{1}{r^{1/2}}\phi\left(\frac{x}{r}\right),~ \phi_{\sigma}(x):=\frac{1}{\sigma^{1/2}}\phi\left(\frac{x}{\sigma}\right), ~\Psi_{\sigma}(x):=\frac{1}{\sigma^{1/2}}\Psi\left(\frac{x}{\sigma}\right).
	\end{equation*}
	And we periodize the above functions so that we can view the resulting functions as functions defined on $\mathbb{T}$. We fix a large parameter $\lambda$ such that $\lambda \sigma\in \mathbb{N}$, and a large time oscillation parameter $\mu\gg\sigma^{-1}$. For every $\xi\in \Lambda$, define
	\begin{equation*}
		\begin{aligned}
			&\phi_{\xi}(t,x):=\phi_{\xi,r,\sigma,\lambda,\mu}(t,x)=\phi_{r}(\lambda \sigma N_\Lambda (\xi\cdot x+\mu t)),\\
			&\varphi_{\xi_1}(x):=\phi_{\xi_1,\sigma,\lambda}(x)=\phi_{\sigma}(\lambda \sigma N_\Lambda \xi_1\cdot x),\\
			&\Psi_{\xi_1}(x):=\Psi_{\xi_1,\sigma,\lambda}(x)=\Psi_{\sigma}(\lambda \sigma N_\Lambda \xi_1\cdot x),
		\end{aligned}
	\end{equation*}
	which are $\left(\frac{\mathbb{T}}{\lambda \sigma}\right)^3$ periodic. By definition, we have
	\begin{align}\label{phi t}
		-\Delta\Psi_{\xi_1}(x)=\lambda^2N^2_\Lambda\varphi_{\xi_1}(x),\quad\quad\text{and}\quad\quad
		\xi\cdot\nabla\phi_\xi=\frac{1}{\mu}\partial_t\phi_\xi.
	\end{align}
	
	The \textit{intermittent velocity flows} are defined by
	$$W_{\xi}(t,x) := \phi_{\xi}(t,x)\varphi_{\xi_1}(x)\xi, \quad \xi\in \Lambda,$$
	and the \textit{intermittent deformation flows} are defined by
	$$E_{\xi}(t,x) := \phi_{\xi}(t,x)\varphi_{\xi_1}(x)\xi_2, \quad \xi\in \bigcup_{i=1}^3\Lambda_i,$$
	Since the map $x \mapsto \lambda \sigma N_{\Lambda}\left(\xi \cdot x+\mu t, \xi_1 \cdot x,\xi_2 \cdot x\right)$ is the composition of a rotation by a rational orthogonal matrix mapping $\left\{e_{1}, e_{2}, e_{3}\right\}$ to $\left\{\xi, \xi_1, \xi_2\right\}$, a translation, and a rescaling by integers, we have
	\begin{align}\label{eq-block-L2}
		\int_{\mathbb{T}^3}\phi_{\xi}\varphi_{\xi_1} dx =0, \quad  \fint_{\mathbb{T}^3}\phi_{\xi}^2\varphi_{\xi_1}^2 dx = \frac{1}{(2\pi)^3}\int_{\mathbb{T}^3}\phi_{\xi}^2\varphi_{\xi_1}^2 dx = 1. 
	\end{align}
	Moreover, the following estimates hold:
	\begin{lem}\label{philemma}
		For any $1\leq p\leq \infty, M, N\in\mathbb{N}$, and $\xi\neq\xi'$ we have the following estimates
		\begin{align}
			&\|\nabla^M\partial_t^N\phi_\xi\|_{L^p}\lesssim(\lambda \sigma)^{M+N}r^{\frac{1}{p}-\frac{1}{2}-M-N}\mu^N,\label{phiparaLp}\\
			&\|\nabla^M\varphi_{\xi_1}\|_{L^p}+\|\nabla^M\Psi_{\xi_1}\|_{L^p}\lesssim\lambda^M\sigma^{\frac{1}{p}-\frac{1}{2}},\label{phiperpLp}\\
			&\|\nabla^M(\phi_\xi\varphi_{\xi_1})\|_{L^p}+\|\nabla^M(\phi_\xi\Psi_{\xi_1})\|_{L^p}\lesssim\lambda^Mr^{\frac{1}{p}-\frac{1}{2}}\sigma^{\frac{1}{p}-\frac{1}{2}},\label{phiproduct}\\
			&\|\phi_\xi\varphi_{\xi_1}\phi_{\xi'}\varphi_{\xi_1'}\|_{L^p}\lesssim \sigma^{\frac{2}{p}-1}r^{-1}\label{phippproduct},
		\end{align}
		where the implicit constants only depend on $p$, $N$ and $M$.
	\end{lem}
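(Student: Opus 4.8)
The plan is to reduce every bound to one‑dimensional estimates for the fixed profiles $\phi,\Psi$, using that the arguments of all the building blocks are rotations of a single linear form rescaled by an integer, and then to treat the genuinely multilinear estimate \eqref{phippproduct} separately by exploiting the transversality of the oscillation directions. All bounds will be uniform in $t$, since the time dependence of $\phi_\xi$ enters only as a translation of its argument.

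First I would record three elementary facts. (i) Since $\phi$ is smooth and supported in $[-1,1]$, $\|\phi_r^{(m)}\|_{L^p(\mathbb{R})}=r^{1/p-1/2-m}\|\phi^{(m)}\|_{L^p(\mathbb{R})}$, and similarly for $\phi_\sigma,\Psi_\sigma$; as $\sigma,r\ll1$ their supports (and those of their derivatives) lie in $(-\pi,\pi)$, so after periodization the $L^p(\mathbb{T})$ norms coincide with the $L^p(\mathbb{R})$ ones. (ii) For $\kappa\in\mathbb{N}$ and any $2\pi$‑periodic $h$, $\|h(\kappa\,\cdot)\|_{L^p(\mathbb{T})}=\|h\|_{L^p(\mathbb{T})}$, by the substitution $y=\kappa x$ and periodicity. (iii) If $n=g\,n_0$ with $g\in\mathbb{N}$ and $n_0\in\mathbb{Z}^3$ primitive, then choosing $B\in\mathrm{GL}_3(\mathbb{Z})$ with first row $n_0$ makes $x\mapsto Bx$ a measure‑preserving automorphism of $\mathbb{T}^3$ sending $n_0\cdot x$ to the first coordinate, so that, combined with (i)--(ii), $\|h(n\cdot x)\|_{L^p_x(\mathbb{T}^3)}=(2\pi)^{2/p}\|h\|_{L^p(\mathbb{T})}$. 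Applying these to $h=\phi_r$ and noting that every spatial or temporal derivative of $\phi_\xi$ acts only on the single‑variable profile, producing $(\lambda\sigma N_\Lambda)^{M+N}\mu^N$ times $\phi_r^{(M+N)}$ at the same affine argument, yields \eqref{phiparaLp}; applying them to $h=\phi_\sigma$ resp. $\Psi_\sigma$ — where the $\sigma^{M}$ coming from the chain rule cancels the $\sigma^{-M}$ from $\phi_\sigma^{(M)}$ — yields \eqref{phiperpLp}.

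For \eqref{phiproduct} I would pass to the coordinates $y'=(\xi\cdot x,\xi_1\cdot x,\xi_2\cdot x)$, in which $\phi_\xi\varphi_{\xi_1}$ (resp. $\phi_\xi\Psi_{\xi_1}$) is the tensor product $\widetilde{\phi}(y_1')\,\widetilde{\varphi}(y_2')$ with no dependence on $y_3'$. Since each $\partial_{x_j}$ is a combination of $\partial_{y_1'}$ and $\partial_{y_2'}$ only, $\nabla^M(\phi_\xi\varphi_{\xi_1})$ is a finite sum of terms $c_\beta\,\widetilde{\phi}^{(\beta_1)}(y_1')\,\widetilde{\varphi}^{(\beta_2)}(y_2')$ with $\beta_1+\beta_2=M$, whose $L^p_x$ norm factorizes by (iii). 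Inserting the one‑dimensional bounds and using $\beta_1+\beta_2=M$, each term is $\lesssim\lambda^M(\sigma/r)^{\beta_1}r^{1/p-1/2}\sigma^{1/p-1/2}$, and since $0<\sigma\ll r<1$ we have $(\sigma/r)^{\beta_1}\le1$, which gives \eqref{phiproduct}.

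The main point is \eqref{phippproduct}, for which a direct application of Hölder's inequality loses a factor $(r/\sigma)^{1/p}$. Instead I would put the two sharply concentrated factors in $L^\infty$, $\|\phi_\xi\|_{L^\infty}+\|\phi_{\xi'}\|_{L^\infty}\lesssim r^{-1/2}$, and estimate $\|\varphi_{\xi_1}\varphi_{\xi_1'}\|_{L^p}$ using that $\xi_1\nparallel\xi_1'$ (Remark \ref{geometric const}), so that $k:=\lambda\sigma N_\Lambda\xi_1$ and $k':=\lambda\sigma N_\Lambda\xi_1'$ are linearly independent integer vectors. The map $x\mapsto(k\cdot x,k'\cdot x)$ is then a submersion $\mathbb{T}^3\to\mathbb{T}^2$ whose fibers are closed geodesics in the rational direction $k\times k'$; its Jacobian $|k\times k'|$ is constant, and because all fibers are translates of $\ker$, the coarea formula gives $\int_{\mathbb{T}^3}f(k\cdot x,k'\cdot x)\,dx=c\int_{\mathbb{T}^2}f$ with $c$ a.e.\ constant, whence $c=\mathrm{Vol}(\mathbb{T}^3)/\mathrm{Vol}(\mathbb{T}^2)=2\pi$ by taking $f\equiv1$. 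Thus $\int_{\mathbb{T}^3}|\varphi_{\xi_1}\varphi_{\xi_1'}|^p\,dx=2\pi\|\phi_\sigma\|_{L^p(\mathbb{T})}^{2p}\lesssim\sigma^{2-p}$, and combining, $\|\phi_\xi\varphi_{\xi_1}\phi_{\xi'}\varphi_{\xi_1'}\|_{L^p}\lesssim r^{-1}\sigma^{2/p-1}$, i.e.\ \eqref{phippproduct}. I expect this last step — the quantitative statement that two slab‑supported functions with distinct rational normal directions overlap on a set of measure $O(\sigma^2)$, rather than the $O(\sigma)$ that Hölder would give — to be the only part requiring real care; everything else is bookkeeping on top of the one‑dimensional scalings.
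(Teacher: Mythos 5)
Your derivations of \eqref{phiparaLp}--\eqref{phiproduct} are correct and standard (the paper simply cites \cite{Beekieweak} for these), and your proof of \eqref{phippproduct} is also correct but takes a genuinely different route from the paper's. The paper works in physical space: it observes that on each periodicity cell of side $\frac{2\pi}{\lambda\sigma}$ the support of $\phi_\xi\varphi_{\xi_1}$ is a bounded family of thin slabs transverse to those of $\phi_{\xi'}\varphi_{\xi_1'}$, so that the intersection has measure $\lesssim\lambda^{-2}(\lambda\sigma)^{-1}$ per cell, i.e. total measure $\lesssim\sigma^2$; it then bounds the $L^1$ norm by $|\mathrm{supp}|\cdot\|\cdot\|_{L^\infty}\lesssim\sigma/r$ and interpolates with $L^\infty$. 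You instead split off $\|\phi_\xi\|_{L^\infty}\|\phi_{\xi'}\|_{L^\infty}\lesssim r^{-1}$ and compute $\|\varphi_{\xi_1}\varphi_{\xi_1'}\|_{L^p}$ exactly by pushing forward Haar measure under the surjective torus homomorphism $x\mapsto(k\cdot x,k'\cdot x)$, turning the integral into a clean tensor product $2\pi\|\phi_\sigma\|_{L^p(\mathbb{T})}^{2p}$. Your approach is perhaps cleaner in that it yields all $L^p$ simultaneously without an explicit interpolation step and makes the mechanism of the gain over H\"older very transparent (the $\sigma^{1/p}$ improvement comes from the product structure induced by transversality); the paper's counting argument is more elementary and localizes the whole loss/gain to one volume estimate. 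One small point shared by both proofs: what is actually used is that $\xi_1$ and $\xi_1'$ are \emph{linearly independent} (not anti-parallel), while Remark~\ref{geometric const} only states $\xi_1\neq\xi_1'$. This is harmless since the finite sets $\Lambda_v,\Lambda_i$ are chosen so as to make the full non-parallelism hold, but you (and the paper) are reading slightly more into the remark than it literally says.
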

	\begin{proof}
		The first three inequalities follow from \cite{Beekieweak}. Here we only prove the last inequality. Recall that $\phi_\xi,\varphi_{\xi_1}, $ are $\left(\frac{\mathbb{T}}{\lambda \sigma}\right)^3$ periodic. In each box with side length $\frac{2\pi}{\lambda \sigma}$, the support of $\phi_\xi\varphi_{\xi_1}$ {consists of at most $2N_{\Lambda}$ parallel thin cubes} with length $\sim\lambda^{-1}$ (in the direction of $\xi_1$), width $\sim(\lambda{\sigma})^{-1}r$ (in the direction of $\xi$) and height $\sim(\lambda{\sigma})^{-1}$ (in the direction of $\xi_2$). {And in each box the support of $\phi_{\xi'}\varphi_{\xi_1'}$ consists of another family of parallel cubes  with the same size.
			Due to our specific choice of the set $\Lambda$ (see Remark \ref{geometric const}), the two families of support cubes are unparallel and the angel between any two different vectors $\xi\neq \xi'$ are larger than a universal constant $\theta_{\Lambda}>0$. In view of this, the intersections of two different support cubes are contained in much smaller cubes, with the length and width bounded by $\sim\lambda^{-1}$, and the height $\sim(\lambda\sigma)^{-1}. $ Moreover, since the number of such cubes are bounded by $4N_{\Lambda}^2$, the total size of the support  of $\phi_\xi\varphi_{\xi_1}\phi_{\xi'}\varphi_{\xi_1'} $ in each box with side length $\frac{2\pi}{\lambda\sigma}$ is bounded by $ \sim \frac{1}{\lambda^2}\frac{1}{\lambda \sigma}$.} We multiply the estimate by $(\lambda \sigma)^3$ to derive
		\begin{equation*}
			\begin{aligned}
				&|\operatorname{supp} \phi_\xi\varphi_{\xi_1}\phi_{\xi'}\varphi_{\xi_1'}|\lesssim{\sigma}^2,\\
				&\|\phi_\xi\varphi_{\xi_1}\phi_{\xi'}\varphi_{\xi_1'}\|_{L^1}\lesssim |\operatorname{supp} \phi_\xi\varphi_{\xi_1}\phi_{\xi'}\varphi_{\xi_1'}| \|\phi_\xi\varphi_{\xi_1}\phi_{\xi'}\varphi_{\xi_1'}\|_{L^\infty}\lesssim \frac{\sigma}{r}.
			\end{aligned}
		\end{equation*}
		Interpolating between $L^1$ and $L^\infty$ yields the estimate \eqref{phippproduct}. 
	\end{proof}
	Now we fix the values of the  parameters $\lambda$, $\sigma$, $r$ and $\mu$ as
	\begin{equation}\label{para def}
		\begin{aligned}
			&\lambda:=\lambda_{q+1}, &\sigma:=\lambda_{q+1}^{-(1-2\alpha)}, 
			&\  r:=\lambda_{q+1}^{-(1-6\alpha)}, &\mu:=\lambda_{q+1}^{1-\alpha}.
		\end{aligned}
	\end{equation}
	\section{The Perturbation}\label{pert}
	\subsection{Mollification}
	In order to avoid the loss of derivatives, we mollify the velocity and the deformation fields. Let $$\psi_\epsilon(t)=\epsilon^{-1}\psi\left(\frac{t}{\epsilon}\right), \quad\quad\quad\tilde \psi_\epsilon(x)=\epsilon^{-3}\tilde \psi\left(\frac{x}{\epsilon}\right),$$ be the  standard 1D and 3D Friedrichs mollifier sequences respectively, with $$\operatorname{supp}\psi\subseteq(-1,1),\quad\quad\quad \operatorname{supp}\tilde \psi\subseteq B_1(0).$$  Define a mollification of $v_q$, $F_q^i$, $R_q^v$, and $R_q^i$ $(i=1,2,3)$ in space and time at length scale $\ell$ by
	\begin{equation*}
		\begin{aligned}
			&v_{\ell} :=\left(v_{q} *_{x} \tilde \psi_{\ell}\right) *_{t} \psi_{\ell},
			&R^v_{\ell} :=\left(R^v_{q} *_{x} \tilde \psi_{\ell}\right) *_{t}\psi_{\ell},\\
			&F^i_{\ell} :=\left(F^i_{q} *_{x} \tilde \psi_{\ell}\right) *_{t}\psi_{\ell},
			&R^i_{\ell} :=\left(R^i_{q} *_{x} \tilde \psi_{\ell}\right) *_{t}\psi_{\ell}.\\
		\end{aligned}
	\end{equation*}
	Then one has 
	\begin{align}\label{suppmol}
		\bigcup_{i=1}^3\operatorname{supp}_t(v_{\ell},R^v_{\ell},F^{i}_{\ell},R^i_{\ell})\subseteq \bigcup_{i=1}^3O_{\ell}\left(\operatorname{supp}_t(v_{q},R^v_{q},F^{i}_{q},R^i_{q})\right).
	\end{align}
	By \eqref{app elastic} we obtain that $(v_{\ell},R^v_{\ell})$ and $(F^i_{\ell},R^i_{\ell})$ satisfy
	\begin{equation}\label{mollified elastic}
		\left\{
		\begin{aligned}
			&\partial_tv_{\ell}+\operatorname{div}(v_{\ell}\otimes v_{\ell}-F^k_{\ell}\otimes F^k_{\ell})+(-\Delta)^\theta v_\ell+\nabla p_{\ell}=\operatorname{div}\left(R^v_{\ell}+R^v_{comm}\right),\\
			&\partial_tF^i_{\ell}+\operatorname{div}(F^i_{\ell}\otimes v_{\ell}-v_{\ell}\otimes F^i_{\ell})=\operatorname{div}\left(R^i_{\ell}+R^i_{comm}\right), i=1,2,3,\\
			&\operatorname{div} v_{\ell}=\operatorname{div} F^i_{\ell}=0,
		\end{aligned}\right.
	\end{equation}
	where the traceless symmetric commutator stress $R^v_{comm}$ and the skew-symmetric stress $R^i_{comm}$ are given by
	\begin{equation}\label{Rcomm}
		\begin{aligned}
			{R}_{c o m m}^{v}=\left(v_{\ell} \hat{\otimes} v_{\ell}\right)-\left(F^i_{\ell}  \hat{\otimes} F^i_{\ell}\right)-\left(\left(v_{q} \hat{\otimes} v_{q}-F^i_{q} \hat{\otimes} F^i_{q}\right) *_{x} \tilde \psi_{\ell}\right) *_{t}\psi_{\ell}, \\
			{R}_{c o m m}^{i}=F^i_{\ell} \otimes v_{\ell}-v_{\ell} \otimes F^i_{\ell}-\left(\left(F^i_{q} \otimes v_{q}-v_{q} \otimes F^i_{q}\right) *_{x} \tilde \psi_{\ell}\right) *_{t}\psi_{\ell}.
		\end{aligned}
	\end{equation}
	Here we use $a\hat{\otimes}b$ to denote the traceless part of tensor $a\otimes b$.
	The new pressure $p_{\ell}$ is defined as
	\begin{equation*}
		p_{\ell}=\left(p_{q} *_{x} \tilde \psi_{\ell}\right) *_{t}\psi_{\ell}-{\frac{1}{3}}\left|v_{\ell}\right|^{2}+ {\frac{1}{3}}\sum_{i=1}^3\left|F^i_{\ell}\right|^{2} + {\frac{1}{3}} \left(\left(\left|v_{q}\right|^{2}-\sum_{i=1}^3\left|F^i_{q}\right|^{2}\right) *_{x} \tilde \psi_{\ell}\right) *_{t}\psi_{\ell}.
	\end{equation*}
	In view of \eqref{STRESS L1}, \eqref{STRESS C1} and \eqref{C1norm} we have for $M\in \mathbb{N}, N\in \mathbb{N}_+$,
	\begin{align}
		\label{Rl L1}&\left\|\nabla^MR^v_{\ell}\right\|_{L^1}+\sum_{i=1}^3\left\|\nabla^MR^i_{\ell}\right\|_{L^1}\lesssim\ell^{-M}\delta_{q+1},\\
		\label{Rl C1}&\left\|R^v_{\ell}\right\|_{C^N_{t,x}}+\sum_{i=1}^3\left\|R^i_{\ell}\right\|_{C^N_{t,x}}\lesssim\lambda_q^{10}\ell^{-N+1},\\
		\label{vl CN}&\left\|v_{\ell}\right\|_{C^N_{t,x}}+\sum_{i=1}^3\left\|F^i_{\ell}\right\|_{C^N_{t,x}}\lesssim\lambda_q^{4}\ell^{-N+1}.
	\end{align}
	For commutator stresses, we follow the estimate in \cite{ConstantinOnsager} and obtain
	\begin{align*}
		&\left\|{R}_{c o m m}^{i}\right\|_{L^1}\lesssim\left\|{R}_{c o m m}^{i}\right\|_{C^0_{t,x}}\lesssim\ell^2\|v_q\|_{C^1_{t,x}}\|F^i_{q}\|_{C^1_{t,x}}\lesssim \ell^2\lambda_q^4,\\
		&\left\|{R}_{c o m m}^{v}\right\|_{L^1}\lesssim\left\|{R}_{c o m m}^{v}\right\|_{C^0_{t,x}}\lesssim\ell^2(\|v_q\|_{C^1_{t,x}}^2+\sum_{i=1}^3\|F^i_{q}\|_{C^1_{t,x}}^2)\lesssim \ell^2\lambda_q^8.
	\end{align*}
	Now we fix the parameter $\ell$ as
	\begin{equation}\label{defell}
		\ell:=\lambda_q^{-20}.
	\end{equation}
	Recall that $\beta b^2\leq \frac{1}{16}$, hence 
	\begin{align}\label{commutator}
		\left\|{R}_{c o m m}^{v}\right\|_{L^1}+\sum_{i=1}^3\left\|{R}_{c o m m}^{i}\right\|_{L^1}{\lesssim}\delta_{q+2}.
	\end{align}
	\subsection{The Principal Part of Perturbations}
	Define the principal part of the perturbations as
	\begin{equation*}
		\begin{aligned}
			w_{q+1}^p{=\sum_{\xi\in\Lambda}a_\xi W_{\xi}}=\sum_{\xi\in\Lambda}a_\xi\phi_\xi\varphi_{\xi_1}\xi,\quad\quad\text{and}\quad\quad e_{q+1}^{i,p}{=\sum_{\xi\in\Lambda_i}a_\xi E_{\xi}}=\sum_{\xi\in\Lambda_i}a_\xi\phi_\xi\varphi_{\xi_1} \xi_2,
		\end{aligned}
	\end{equation*}
	where the amplitude functions $a_\xi$ are to be determined such that $R_q^v$ and $R_q^{i}$ can be cancelled by applying Lemma \ref{Representation of symmetric matrices} and Lemma \ref{Representation of skew-symmetric matrices}.  In order to cancel all the stresses, the velocity perturbation $w_{q+1}=v_{q+1}-v_\ell$ should contain wavevectors from $\Lambda=\bigcup_{i=1}^3\Lambda_i\cup\Lambda_v$. (Wavevectors from $\Lambda_v$ are used to cancel $R_\ell^v$, and wavevectors from $\Lambda_i$ take part in the cancellation of $R_\ell^i$.) The deformation perturbation $e_{q+1}^i=F_{q+1}^i-F_\ell^i$ should contain wavevectors from $\Lambda_i$.   To achieve this, we first introduce a smooth increasing function $\chi$ satisfying
	\begin{equation*}
		\chi(z)=\begin{cases}
			1,&0\leq z\leq 1;\\
			z,&z\geq 2.
		\end{cases}
	\end{equation*}
	And we define a temporal cut-off as in \cite{Luonon}: let $\Phi_q(t):[0,T]\to [0,1]$ be a smooth cut-off function that satisfies
	\begin{align}
		&\Phi_q(t)=1\ \ \text{on}\ \operatorname{supp}_t (R^v_\ell,R^1_\ell,R^2_\ell,R^3_\ell),\quad\quad\quad \ \operatorname{supp}\Phi_q\subset O_\ell\left(\operatorname{supp}_t (R^v_\ell,R^1_\ell,R^2_\ell,R^3_\ell)\right),\nonumber\\
		&\|\Phi_q\|_{C^k}\lesssim \ell^{-k}, \ \ k=1,2,3.\label{esPhi}
	\end{align} 
	Then we set
	\begin{equation*}
		\rho_{i}(t,x) :=2 \delta_{q+1} \varepsilon_{F}^{-1} \chi\left(\delta_{q+1}^{-1}|{R}_\ell^{i}(t,x)|\right), \quad i=1,2,3,
	\end{equation*}
	where $\varepsilon_F$ is the small radius in the geometric Lemma \ref{Representation of skew-symmetric matrices}. It is easy to verify that pointwise we have
	\begin{equation*}
		\left|\frac{R_\ell^{i}(t,x)}{\rho_{i}(t,x)}\right|=\left|\frac{R_\ell^{i}(t,x)}{2 \delta_{q+1} \varepsilon_{F}^{-1} \chi\left( \delta_{q+1}^{-1}|{R}_\ell^{i}(t,x)|\right)}\right| \leq \varepsilon_{F}.
	\end{equation*}
	By definition and \eqref{Rl L1} we have
	\begin{equation}\label{rhoi L1}
		\|\rho_i\|_{L^1}\lesssim\delta_{q+1}+\|R_\ell^{i}\|_{L^1}\lesssim \delta_{q+1},
	\end{equation}
	where the implicit constant only depends on $\varepsilon_{F}$. Furthermore, by standard H\"{o}lder estimates (see Proposition C.1 in \cite{BuckAnomalous}), \eqref{Rl C1}, and the parameter inequality $\ell\ll\delta_{q+1}$, we obtain
	\begin{align}
		&\|\rho_i\|_{C^0_{t,x}} \lesssim \ell^{-1}, \quad \left\|\rho_i^{\frac12}\right\|_{C^0_{t,x}} \lesssim \ell^{-1}, \quad  \left\|\rho_i^{-1}\right\|_{C^0_{t,x}} \lesssim \delta_{q+1}^{-1}, \label{eq:rhoC0}\\
		&\left\|\rho_i^\frac{1}{2}\right\|_{C^N_{t,x}}\lesssim \delta_{q+1}^{\frac{1}{2}}(\|\delta_{q+1}^{-1}R_\ell^i\|_{C^N_{t,x}}+\|\delta_{q+1}^{-1}R_\ell^i\|_{C^1_{t,x}}^N)\lesssim\delta_{q+1}^{-\frac{1}{2}}\ell^{-N}+\delta_{q+1}^{\frac{1}{2}-N}\ell^{-N}\lesssim\ell^{-2N} \label{rhoCj}\\
		& \left\|\rho_i^{-1}\right\|_{C^N_{t,x}} \lesssim \ell^{-N}\delta_{q+1}^{-N-1},\label{rho-1Cj}
	\end{align}
	for $N=1,2,3$. Then for each $i=1,2,3$, we define the deformation amplitude functions
	\begin{equation*}
		a_\xi(t,x):=a_{\xi,i}(t,x)=\rho_i^{\frac{1}{2}}\Phi_q(t)\gamma_\xi\left(\frac{-R_\ell^i}{\rho_i}\right), ~\xi\in\Lambda_i.
	\end{equation*}
	By Lemma \ref{Representation of skew-symmetric matrices} {and \eqref{eq-block-L2},} we have
	\begin{equation*}
		-{R}_\ell^{i}=\sum_{\xi\in\Lambda_i}a_\xi^2(\xi_{2} \otimes \xi-\xi \otimes \xi_{2})=\sum_{\xi\in\Lambda_i}a_\xi^2 \fint\phi_\xi^2\varphi_{\xi_1}^{2} (\xi_{2} \otimes \xi-\xi \otimes \xi_{2}).
	\end{equation*}
	Thus, we get
	\begin{equation}\label{cancelRi}
		\sum_{\xi \in \Lambda_{i}} a_{\xi}^{2} \phi_\xi^2\varphi_{\xi_1}^2\left(\xi_{2} \otimes \xi-\xi \otimes \xi_{2}\right)+{R}_\ell^{i}=\sum_{\xi \in \Lambda_{i}} a_{\xi}^{2} \mathbb{P}_{ \neq 0}\left(\phi_\xi^2\varphi_{\xi_1}^2\right)\left(\xi_{2} \otimes \xi-\xi \otimes \xi_{2}\right).
	\end{equation}
	Before giving the definition of the velocity amplitude functions, we first emphasize some factors that need to be considered. In order to cancel all the stresses, the velocity perturbation should have wavevectors from both $\Lambda_v$ and $\bigcup_{i=1}^3\Lambda_i$. The wavevectors in $\Lambda_v$ will be used to cancel the previous Reynolds stress $R^v_\ell$, as well as a new matrix $R_F$ which is related to wavevectors from $\bigcup_{i=1}^3\Lambda_i$:
	\begin{equation}\label{defRF}
		R_{F}=\sum_{i=1}^3\sum_{\xi\in\Lambda_i}a_\xi^2\left(\xi \otimes \xi-\xi_{2} \otimes \xi_{2}\right),
	\end{equation}
	see section \ref{secosc} for details. Note that the estimate of $L^1$ norm of $R_{F}$ is comparable to that of $R_q^v$ (see Lemma \ref{lemma estimate of a} below). It is too large to go into the new Reynolds stress so must be canceled together with $R_q^v$. This motivates us to define $\rho_v$ and the associated velocity amplitudes as
	\begin{equation*}
		\begin{aligned}
			\rho_{v}(t,x) &:=2 \delta_{q+1} \varepsilon_{v}^{-1} \chi\left(\delta_{q+1}^{-1}|{R}_\ell^{v}(t,x)+R_{F}|\right),\\
			a_\xi(t,x)&:=a_{\xi,v}(t,x)=\rho_v^{\frac{1}{2}}\Phi_q(t)\gamma_\xi\left(\mathrm{Id}-\frac{R_\ell^v+R_{F}}{\rho_v}\right), ~\xi\in\Lambda_v.
		\end{aligned}
	\end{equation*}
	Analogously we can verify that $\mathrm{Id}-\frac{R_\ell^v+R_{F}}{\rho_v}$ is in the domain of functions $\gamma_\xi$ {in Lemma \ref{Representation of symmetric matrices}} and the following {equality} holds:
	\begin{equation}\label{cancelRv}
		\sum_{\xi \in \Lambda_{v}} a_{\xi}^{2} \phi_\xi^2\varphi_{\xi_1}^2\left(\xi \otimes \xi\right)+{R}_\ell^{v}+R_{F}=\rho_v\mathrm{Id}+\sum_{\xi \in \Lambda_{v}} a_{\xi}^{2} \mathbb{P}_{ \neq 0}\left(\phi_\xi^2\varphi_{\xi_1}^2\right)\left(\xi \otimes \xi\right).
	\end{equation}
	We note that the definitions of $a_\xi$ for $\xi\in \Lambda_v$, respectively for $\xi\in\bigcup_{i=1}^3\Lambda_i$, are different. Throughout the paper we abuse this notation and write $a_\xi=a_{\xi,i}$ for $\xi\in\Lambda_i$ and $a_\xi=a_{\xi,v}$ for $\xi\in\Lambda_v$.
	
	Now we present the following estimates for the amplitude functions.
	\begin{lem}\label{lemma estimate of a}
		(Estimate of the amplitude functions) For any $\xi\in \Lambda=\bigcup_{i=1}^3\Lambda_i\cup \Lambda_v$  and $N=1,2,3$, we have the following estimates
		\begin{align}
			\label{aCN}&\|a_\xi\|_{C^N_{t,x}}\lesssim\ell^{-3N},\\
			\label{aL2}&\|a_\xi\|_{L^2}\lesssim\delta_{q+1}^\frac{1}{2}.
		\end{align}
	\end{lem}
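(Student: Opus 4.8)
The plan is to prove the two estimates \eqref{aCN} and \eqref{aL2} by reducing everything to the definitions $a_{\xi,i}=\rho_i^{1/2}\Phi_q\gamma_\xi(-R_\ell^i/\rho_i)$ and $a_{\xi,v}=\rho_v^{1/2}\Phi_q\gamma_\xi(\mathrm{Id}-(R_\ell^v+R_F)/\rho_v)$, and then invoking the chain/product rule together with the $C^N$ bounds already collected for the ingredients. The first thing I would do is record the auxiliary estimates that $R_F$ satisfies, since $\rho_v$ and $a_{\xi,v}$ depend on it: from the definition \eqref{defRF}, $R_F=\sum_{i}\sum_{\xi\in\Lambda_i}a_{\xi,i}^2(\xi\otimes\xi-\xi_2\otimes\xi_2)$, so $\|R_F\|_{C^N_{t,x}}$ is controlled by $\sum\|a_{\xi,i}\|_{C^N_{t,x}}\|a_{\xi,i}\|_{C^0_{t,x}}$, and $\|R_F\|_{L^1}\lesssim\sum\|a_{\xi,i}\|_{L^2}^2\lesssim\delta_{q+1}$; the $C^N$ bound for $a_{\xi,i}$ is established first (it does not depend on $R_F$), so this is a clean induction-free ordering. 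I would also note the pointwise lower bound implicit in the construction, namely $|{-R_\ell^i}/\rho_i|\le\varepsilon_F$ and $|\mathrm{Id}-(R_\ell^v+R_F)/\rho_v-\mathrm{Id}|\le\varepsilon_v$, so that the arguments of $\gamma_\xi$ stay inside the compact domains $B_{\varepsilon_F}(0)$, $B_{\varepsilon_v}(\mathrm{Id})$ where $\gamma_\xi$ and all its derivatives are bounded by an absolute constant.

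For \eqref{aCN}, I would treat $a_{\xi,i}$ first. Write $a_{\xi,i}=\rho_i^{1/2}\,\Phi_q\,(\gamma_\xi\circ H_i)$ with $H_i:=-R_\ell^i/\rho_i$. By the Leibniz rule, $\|a_{\xi,i}\|_{C^N_{t,x}}\lesssim\sum_{k_1+k_2+k_3=N}\|\rho_i^{1/2}\|_{C^{k_1}_{t,x}}\|\Phi_q\|_{C^{k_2}_{t,x}}\|\gamma_\xi\circ H_i\|_{C^{k_3}_{t,x}}$. The factor $\|\rho_i^{1/2}\|_{C^{k_1}_{t,x}}\lesssim\ell^{-2k_1}$ comes from \eqref{eq:rhoC0}--\eqref{rhoCj}; the factor $\|\Phi_q\|_{C^{k_2}}\lesssim\ell^{-k_2}$ from \eqref{esPhi}; and $\|\gamma_\xi\circ H_i\|_{C^{k_3}_{t,x}}$ is handled by the composition estimate (Faà di Bruno / Proposition C.1 in \cite{BuckAnomalous}): it is $\lesssim\|D\gamma_\xi\|_{C^0}\|H_i\|_{C^{k_3}_{t,x}}+\|\gamma_\xi\|_{C^{k_3}}\|H_i\|_{C^1_{t,x}}^{k_3}$, and $\|H_i\|_{C^{k}_{t,x}}\lesssim\sum_{j\le k}\|R_\ell^i\|_{C^j_{t,x}}\|\rho_i^{-1}\|_{C^{k-j}_{t,x}}\lesssim\lambda_q^{10}\ell^{1-k}\cdot\ell^{-k}\delta_{q+1}^{-k-1}$ using \eqref{Rl C1} and \eqref{rho-1Cj}. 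Collecting powers of $\ell=\lambda_q^{-20}$ and $\delta_{q+1}=\lambda_1^{3\beta}\lambda_q^{-2\beta}$ and using $\beta b^2\le\tfrac1{16}$ (so $\delta_{q+1}^{-1}\lesssim\ell^{-1}$ for $a$ large), one absorbs all the polynomial-in-$\lambda_q$ factors into $\ell^{-3N}$; the exponent $3N$ is chosen exactly so the worst term $\|\rho_i^{1/2}\|_{C^N}\lesssim\ell^{-2N}$ multiplied by a further $\ell^{-N}$ slack fits. For $a_{\xi,v}$ the argument is verbatim the same with $\rho_i$ replaced by $\rho_v$ and $R_\ell^i$ replaced by $R_\ell^v+R_F$; here one needs $\|R_\ell^v+R_F\|_{C^N_{t,x}}\lesssim\ell^{-3N}\cdot(\text{something absorbable})$, which follows from \eqref{Rl C1} for $R_\ell^v$ and from the $R_F$ bound established in the previous paragraph (which at that point is already known, since it only uses the $a_{\xi,i}$ estimate).

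For \eqref{aL2}, I would argue directly and more simply: $|a_{\xi,i}|=\rho_i^{1/2}\Phi_q|\gamma_\xi(H_i)|\le C\rho_i^{1/2}$ pointwise since $\Phi_q\le 1$ and $\gamma_\xi$ is bounded on $B_{\varepsilon_F}(0)$ by a constant; hence $\|a_{\xi,i}\|_{L^2}^2\le C\|\rho_i\|_{L^1}\lesssim\delta_{q+1}$ by \eqref{rhoi L1}, giving $\|a_{\xi,i}\|_{L^2}\lesssim\delta_{q+1}^{1/2}$. For $\xi\in\Lambda_v$, the same computation needs $\|\rho_v\|_{L^1}\lesssim\delta_{q+1}$, which follows from $\|\rho_v\|_{L^1}\lesssim\delta_{q+1}+\|R_\ell^v\|_{L^1}+\|R_F\|_{L^1}\lesssim\delta_{q+1}$ using \eqref{Rl L1} and the $L^1$ bound $\|R_F\|_{L^1}\lesssim\delta_{q+1}$ noted above. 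I expect the main obstacle to be purely bookkeeping: tracking the precise powers of $\ell$ and $\delta_{q+1}$ through the composition estimate for $\gamma_\xi\circ H$ and verifying that every factor of $\delta_{q+1}^{-1}$ is dominated by $\ell^{-1}$ under \eqref{para ineq} for $a$ sufficiently large, so that the final bound collapses to the clean form $\ell^{-3N}$; there is no conceptual difficulty, but one must be careful that the estimate for $a_{\xi,v}$ is not circular, which is why $R_F$'s bounds (depending only on $a_{\xi,i}$) must be proved first.
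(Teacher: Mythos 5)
Your proposal is correct and follows essentially the same route as the paper: for \eqref{aCN} a Leibniz/Fa\`a di Bruno decomposition of $a_\xi=\rho^{1/2}\Phi_q\,\gamma_\xi(\cdot)$ using \eqref{eq:rhoC0}--\eqref{rho-1Cj}, \eqref{esPhi}, and \eqref{Rl C1}; for \eqref{aL2} the pointwise bound $|a_\xi|\lesssim\rho^{1/2}$ together with $\|\rho\|_{L^1}\lesssim\delta_{q+1}$; and the crucial ordering of first proving the $\Lambda_i$ case, then bounding $R_F$ in $C^N$ and $L^1$, and only then treating $\Lambda_v$ to avoid circularity. One small refinement worth flagging: the paper actually obtains the sharper bound $\ell^{-2N}$ for $\xi\in\bigcup_i\Lambda_i$ and only needs the weaker $\ell^{-3N}$ for $\xi\in\Lambda_v$ (because $\|\rho_v^{1/2}\|_{C^N}\lesssim\ell^{-5N/2}$ after absorbing the $R_F$ contribution), so the ``$\ell^{-N}$ slack'' you describe is really the extra loss incurred by $R_F$ in the $\Lambda_v$ case, not a uniform slack built in from the start; your proof is nonetheless correct since the lemma states only the weaker $\ell^{-3N}$ for all $\xi\in\Lambda$.
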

	\begin{proof}We first consider $\xi\in\bigcup_{i=1}^3\Lambda_i$. By  \eqref{Rl C1} and \eqref{eq:rhoC0}-\eqref{rho-1Cj}, one gets that for $i=1,2,3$,
		\begin{align*}
			\|a_\xi\|_{C^N_{t,x}}\lesssim \|\rho_i^\frac{1}{2}\|_{C^N_{t,x}} + \|\rho_i^\frac{1}{2}\|_{C^0_{t,x}}\left( \|\Phi_q\|_{C^N}+\|\rho_i^{-1}R^i_\ell\|_{C^1_{t,x}}^N+\|\rho_i^{-1}R^i_\ell\|_{C^N_{t,x}}\right)\lesssim \ell^{-2N},\ \ \xi\in\Lambda_i.
		\end{align*}
		Moreover, \eqref{rhoi L1} yields that 
		\begin{align*}
			\|a_\xi\|_{L^2}\lesssim \|\rho_i\|^\frac{1}{2}_{L^1}\|\gamma_\xi\|_{C^0}\lesssim  \delta_{q+1}^\frac{1}{2},\ \ \xi\in\Lambda_i.
		\end{align*}
		It remains to prove the estimates for $\xi\in\Lambda_v$. Observe that 
		\begin{align*}
			\|R_F\|_{C^N_{t,x}}\lesssim\sum_{i=1}^3 \sum_{\xi\in\Lambda_i}\|a_\xi^2\|_{C^N_{t,x}}\lesssim \ell^{-2N},\ \ \ \ \|R_F\|_{L^1}\lesssim \sum_{i=1}^3 \sum_{\xi\in\Lambda_i}\|a_\xi\|_{L^2}^2\lesssim \delta_{q+1}.
		\end{align*}
		Using the same techniques used to derive \eqref{rhoi L1} and \eqref{rhoCj}, we obtain 
		\begin{equation*}
			\|\rho_v\|_{L^1}\lesssim \delta_{q+1},\ \ \ \|\rho_v^\frac{1}{2}\|_{C^  N_{t,x}}\lesssim \ell^{-\frac{5}{2}N}.
		\end{equation*}
		Hence we get 
		\begin{align*}
			& \|a_\xi\|_{C^N_{t,x}}\lesssim \|\rho_v^\frac{1}{2}\|_{C^N_{t,x}} + \|\rho_v^\frac{1}{2}\|_{C^0_{t,x}}\left( \|\Phi_q\|_{C^N}+\|\rho_v^{-1}(R^v_\ell+R_F)\|_{C^1_{t,x}}^N+\|\rho_v^{-1}(R^v_\ell+R_F)\|_{C^N_{t,x}}\right)\lesssim \ell^{-3N},\\
			&   \|a_\xi\|_{L^2}\lesssim \|\rho_v\|^\frac{1}{2}_{L^1}\|\gamma_\xi\|_{C^0}\lesssim  \delta_{q+1}^\frac{1}{2},\quad\quad\quad \ \xi\in\Lambda_v.
		\end{align*}
		This completes the proof.
	\end{proof}
	
	\subsection{Incompressibility Correctors}
	Notice that $w_{q+1}^p$ and $e_{q+1}^{i,p}$ are not divergence free, in order to fix it, we introduce the incompressibility correctors
	\begin{equation*}
		\begin{aligned}
			w_{q+1}^c:=&\frac{1}{N_\Lambda^2\lambda_{q+1}^2}\sum_{\xi\in\Lambda}\left(\operatorname{curl}(\nabla a_\xi\times(\phi_\xi\Psi_{\xi_1}\xi))+\nabla a_\xi\times\operatorname{curl}(\phi_\xi\Psi_{\xi_1}\xi)+a_\xi\nabla\phi_\xi\times\operatorname{curl}(\Psi_{\xi_1}\xi)\right),\\
			e_{q+1}^{i,c}:=&\frac{1}{N_\Lambda^2\lambda_{q+1}^2}\sum_{\xi\in\Lambda_i}\left(\operatorname{curl}(\nabla a_\xi\times(\phi_\xi\Psi_{\xi_1}\xi_2))+\nabla a_\xi\times\operatorname{curl}(\phi_\xi\Psi_{\xi_1}\xi_2){- a_\xi\Delta\phi_\xi\Psi_{\xi_1}\xi_2}\right).\\
		\end{aligned}
	\end{equation*}
	By definition one has
	\begin{equation*}
		\begin{aligned}
			&w_{q+1}^p+w_{q+1}^c=\frac{1}{N_\Lambda^2\lambda_{q+1}^2}\operatorname{curl}\operatorname{curl}\sum_{\xi\in\Lambda}a_\xi\phi_\xi\Psi_{\xi_1}\xi,\\
			&e_{q+1}^{i,p}+e_{q+1}^{i,c}=\frac{1}{N_\Lambda^2\lambda_{q+1}^2}\operatorname{curl}\operatorname{curl}\sum_{\xi\in\Lambda_i}a_\xi\phi_\xi\Psi_{\xi_1}\xi_2.
		\end{aligned}
	\end{equation*}
	Hence $\operatorname{div}(w_{q+1}^p+w_{q+1}^c)=\operatorname{div}(e_{q+1}^{i,p}+e_{q+1}^{i,c})=0 $.
	\subsection{Temporal Correctors}
	In addition to the incompressibility correctors, we introduce the temporal correctors, which are defined by
	\begin{align}
		w_{q+1}^t&=-\frac{1}{\mu}\sum_{\xi\in\Lambda}\mathbb{P}_H\mathbb{P}_{\neq 0}(a_\xi^2\phi_\xi^2\varphi_{\xi_1}^2)\xi,\label{delwt}\quad\text{and}\quad
		e_{q+1}^{i,t}=-\frac{1}{\mu}\sum_{\xi\in\Lambda_i}\mathbb{P}_H\mathbb{P}_{\neq 0}(a_\xi^2\phi_\xi^2\varphi_{\xi_1}^2)\xi_2.
	\end{align}
	Here $\mathbb{P}_{H}=\mathrm{Id}-\nabla(\Delta^{-1}\operatorname{div})$ denote the usual Helmholtz projector onto divergence free fields. 
	It is easy to verify that $w_{q+1}^t$ and $e_{q+1}^{i,t}$ are both divergence free and have zero mean.
	By definition we have
	\begin{align}
		\label{partial t wt}\partial_tw_{q+1}^t=&-\frac{1}{\mu}\sum_{\xi\in\Lambda}\mathbb{P}_{\neq 0}\partial_t(a_\xi^2\phi_\xi^2\varphi_{\xi_1}^2\xi)+\frac{1}{\mu}\sum_{\xi\in\Lambda}\nabla\left(\Delta^{-1}\operatorname{div}\partial_t(a_\xi^2\phi_\xi^2\varphi_{\xi_1}^2\xi)\right).\\
		\label{partial t et}\partial_te_{q+1}^{i,t}=&-\frac{1}{\mu}\sum_{\xi\in\Lambda_i}\mathbb{P}_{\neq 0}\partial_t(a_\xi^2\phi_\xi^2\varphi_{\xi_1}^2\xi_2)+\frac{1}{\mu}\sum_{\xi\in\Lambda_i}\nabla\left(\Delta^{-1}\operatorname{div}\partial_t(a_\xi^2\phi_\xi^2\varphi_{\xi_1}^2\xi_2)\right).
	\end{align}
	
	We define the perturbations:
	\begin{equation}\label{tpc}
		w_{q+1}:=w_{q+1}^p+w_{q+1}^c+w_{q+1}^t, ~~~e_{q+1}^{i}=e_{q+1}^{i,p}+e_{q+1}^{i,c}+e_{q+1}^{i,t},
	\end{equation}
	which are mean zero and divergence-free. 
	The new velocity field and deformation vectors are defined as
	\begin{equation}\label{def vq+1}
		v_{q+1}:=v_{q}+w_{q+1}, ~~~\quad\quad F_{q+1}^{i}=F_{q}^{i}+e_{q+1}^{i}.
	\end{equation}
	\subsection{Verification of Inductive Estimates for Perturbations}
	To estimate the $L^2$ norm of $w_{q+1}^p$ and $e_{q+1}^{i,p}$, we introduce the following $L^p$ Decorrelation Lemma from \cite{Luonon}:
	\begin{lem}\label{Lp Decorrelation}
		Let $f,g\in C^\infty(\mathbb{T}^3)$, and $g$ is $(\frac{\mathbb{T}}{\kappa})^3$ periodic, $\kappa\in\mathbb{N}$. Then for $1\leq p \leq \infty$,
		\begin{equation*}
			\|fg\|_{L^p}\lesssim \|f\|_{L^p}\|g\|_{L^p}+\kappa^{-\frac{1}{p}}\|f\|_{C^1}\|g\|_{L^p}.
		\end{equation*}
	\end{lem}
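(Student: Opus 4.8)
The plan is to prove this by localizing to the period lattice of $g$: partition $\mathbb{T}^3$ into $\kappa^3$ congruent axis-parallel subcubes, each of which is exactly one fundamental domain for $g$, and observe that on such a small cube $f$ varies by at most $O(\kappa^{-1}\|f\|_{C^1})$ while $g$ realizes a full period. Replacing $f$ by its average over each subcube \emph{decorrelates} the two factors, turning $\|fg\|_{L^p}$ into (essentially) the product $\|f\|_{L^p}\|g\|_{L^p}$, and the error committed in this replacement is precisely the second term on the right-hand side.

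Concretely, since $\kappa\in\mathbb{N}$ and $g$ is $\big(\mathbb{T}/\kappa\big)^3$-periodic, the cubes $Q_j:=\tfrac{2\pi}{\kappa}\big(j+[0,1)^3\big)$, $j\in\{0,\dots,\kappa-1\}^3$, tile $\mathbb{T}^3$ and each $Q_j$ is a fundamental domain for the periodicity of $g$; hence
\[
\|g\|_{L^p(Q_j)}=\kappa^{-3/p}\|g\|_{L^p}\qquad\text{for every }j.
\]
Set $\bar f_j:=\fint_{Q_j}f$. By the mean value theorem and $\operatorname{diam}Q_j\lesssim\kappa^{-1}$ one has $\|f-\bar f_j\|_{L^\infty(Q_j)}\lesssim\kappa^{-1}\|\nabla f\|_{L^\infty}\le\kappa^{-1}\|f\|_{C^1}$, so splitting $fg=\bar f_j\,g+(f-\bar f_j)g$ on $Q_j$ gives
\[
\|fg\|_{L^p(Q_j)}\le\big(|\bar f_j|+C\kappa^{-1}\|f\|_{C^1}\big)\,\kappa^{-3/p}\|g\|_{L^p}.
\]
Raising to the $p$-th power, summing over the $\kappa^3$ cubes, and using $(a+b)^p\le 2^{p-1}(a^p+b^p)$ together with the Jensen bound $\sum_j|\bar f_j|^p\le\sum_j\fint_{Q_j}|f|^p\lesssim\kappa^{3}\|f\|_{L^p}^p$ (valid since $p\ge1$), we obtain
\[
\|fg\|_{L^p}^p=\sum_j\|fg\|_{L^p(Q_j)}^p\lesssim\kappa^{-3}\|g\|_{L^p}^p\Big(\kappa^{3}\|f\|_{L^p}^p+\kappa^{3-p}\|f\|_{C^1}^p\Big)=\|f\|_{L^p}^p\|g\|_{L^p}^p+\kappa^{-p}\|f\|_{C^1}^p\|g\|_{L^p}^p.
\]
Taking $p$-th roots and using $\kappa^{-1}\le\kappa^{-1/p}$ yields the stated inequality for $1\le p<\infty$; the case $p=\infty$ is immediate since $\|fg\|_{L^\infty}\le\|f\|_{L^\infty}\|g\|_{L^\infty}$.

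This is a routine localization estimate, so there is no substantial obstacle. The only point deserving attention is the choice of the partition: the subcubes must be honest fundamental domains for $g$, which is exactly where the integrality of $\kappa$ and the $\big(\mathbb{T}/\kappa\big)^3$-periodicity of $g$ enter, after which the exact identity $\|g\|_{L^p(Q_j)}=\kappa^{-3/p}\|g\|_{L^p}$ carries the proof. We remark that the argument in fact delivers the error term with the stronger factor $\kappa^{-1}$ in place of $\kappa^{-1/p}$; only the weaker form stated above is needed in the sequel.
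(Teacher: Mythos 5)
Your proof is correct. Note that the paper does not prove this lemma itself --- it simply imports it from \cite{Luonon} --- so there is no in-paper argument to compare against; your localization proof (partition $\mathbb{T}^3$ into $\kappa^3$ fundamental domains of $g$, replace $f$ by its cell average $\bar f_j$, control the replacement error by the mean-value bound $\|f-\bar f_j\|_{L^\infty(Q_j)}\lesssim\kappa^{-1}\|f\|_{C^1}$, and exploit $\|g\|_{L^p(Q_j)}=\kappa^{-3/p}\|g\|_{L^p}$) is the standard proof of such decorrelation estimates and is essentially the argument in the cited reference. All the steps check out: the tiling is exact because $\kappa\in\mathbb{N}$, the Jensen step $\sum_j|\bar f_j|^p\le\sum_j\fint_{Q_j}|f|^p\lesssim\kappa^3\|f\|_{L^p}^p$ is valid for $p\ge1$, the constants from $(a+b)^p\le 2^{p-1}(a^p+b^p)$ survive the $p$-th root uniformly, and the $p=\infty$ case is trivial. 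Your remark that this route in fact yields the sharper factor $\kappa^{-1}$ in the error term is accurate; the stated $\kappa^{-1/p}$ is what the paper needs (it is only invoked at $p=2$, giving $(\lambda_{q+1}\sigma)^{-1/2}$), and keeping the weaker exponent in the statement does no harm.
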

	{Now we prove the inductive estimates of the perturbations $w_{q+1}$ and $e_{q+1}^i$.}
	\begin{prop}\label{estimateper}
		The {perturbations $w_{q+1}$ and $e_{q+1}^i$}  obey the following bounds
		\begin{align}
			\label{L2 of perturbations}&\|w_{q+1}\|_{L^2}+\sum_{i=1}^3\|e^{i}_{q+1}\|_{L^2}\leq M\delta_{q+1}^\frac{1}{2},\\
			\label{Lp of perturbations}&\|w_{q+1}\|_{L^p}+\sum_{i=1}^3\|e_{q+1}^{i}\|_{L^p}\lesssim\ell^{-3}(\sigma r)^{\frac{1}{p}-\frac{1}{2}},\\
			\label{W1p}&\|w_{q+1}\|_{W^{1,p}}+\sum_{i=1}^3\|e^i_{q+1}\|_{W^{1,p}}\lesssim\ell^{-9}\lambda_{q+1}(\sigma r)^{\frac{1}{p}-\frac{1}{2}},\\
			\label{C1 of perturbations}&\|w_{q+1}\|_{C^1_{t,x}}+\sum_{i=1}^3\|e^{i}_{q+1}\|_{C^1_{t,x}}\leq\frac{1}{10}\lambda_{q+1}^{4},
		\end{align}
		for $p\in[1,+\infty]$.  Here the constant $M$ depends only on $\varepsilon_v$ and $\varepsilon_F$.
	\end{prop}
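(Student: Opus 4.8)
\textbf{Proof plan for Proposition \ref{estimateper}.}
The strategy is to treat the principal, incompressibility, and temporal parts of the perturbations separately, using the building-block estimates of Lemma \ref{philemma}, the amplitude estimates of Lemma \ref{lemma estimate of a}, and the antidivergence / projection bounds \eqref{est antidiv}, and then combine them with the parameter choices \eqref{para def} and $\ell=\lambda_q^{-20}$. The one estimate requiring genuine care is the sharp $L^2$ bound \eqref{L2 of perturbations}, where the constant $M$ must be absolute (independent of $q$); all the remaining bounds \eqref{Lp of perturbations}--\eqref{C1 of perturbations} are allowed the implicit $\lesssim$ constant and so follow by more routine counting of powers of $\lambda_{q+1}$, $\sigma$, $r$, $\mu$ against negative powers of $\ell$.

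First I would establish \eqref{L2 of perturbations}. For the principal part $w_{q+1}^p=\sum_{\xi\in\Lambda}a_\xi\phi_\xi\varphi_{\xi_1}\xi$, since the functions $\phi_\xi\varphi_{\xi_1}$ for distinct $\xi$ oscillate in different directions and $\fint\phi_\xi^2\varphi_{\xi_1}^2=1$ by \eqref{eq-block-L2}, the cross terms are lower order; applying Lemma \ref{Lp Decorrelation} with $p=2$, $g=\phi_\xi\varphi_{\xi_1}$ ($\kappa=\lambda_{q+1}\sigma$) and $f=a_\xi$ gives $\|a_\xi\phi_\xi\varphi_{\xi_1}\|_{L^2}^2 = \|a_\xi\|_{L^2}^2 + O((\lambda_{q+1}\sigma)^{-1/2}\|a_\xi\|_{C^1}^2)$, and by Lemma \ref{lemma estimate of a} the error is $\lesssim (\lambda_{q+1}\sigma)^{-1/2}\ell^{-6}$, which is $\ll \delta_{q+1}$ by the choice of parameters (here one uses $\beta b^2\le 1/16$, $\sigma=\lambda_{q+1}^{-(1-2\alpha)}$, $\ell=\lambda_q^{-20}$). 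Summing over the finitely many $\xi\in\Lambda$ and using $\|a_\xi\|_{L^2}\lesssim\delta_{q+1}^{1/2}$ with a constant depending only on $\varepsilon_v,\varepsilon_F$ yields $\|w_{q+1}^p\|_{L^2}\le \frac{M}{2}\delta_{q+1}^{1/2}$ for a suitable absolute $M$; the same argument applies to each $e_{q+1}^{i,p}$. Then I would show the correctors are negligible in $L^2$: for $w_{q+1}^c$ one uses the representation $w_{q+1}^p+w_{q+1}^c=\tfrac{1}{N_\Lambda^2\lambda_{q+1}^2}\operatorname{curl}\operatorname{curl}\sum a_\xi\phi_\xi\Psi_{\xi_1}\xi$, so $w_{q+1}^c$ gains factors of $\lambda_{q+1}^{-1}$ (with at most $\ell^{-\text{const}}$ derivative losses on $a_\xi$ and the $\Psi_{\xi_1}$ bound \eqref{phiperpLp} being better than $\varphi_{\xi_1}$'s by a factor $\sigma$), giving $\|w_{q+1}^c\|_{L^2}\lesssim \ell^{-\text{const}}\lambda_{q+1}^{-1}\delta_{q+1}^{1/2}\ll \delta_{q+1}^{1/2}$; for $w_{q+1}^t$ one uses \eqref{delwt}, boundedness of $\mathbb{P}_H\mathbb{P}_{\neq0}$ on $L^2$ (in fact one can go through $L^{3/2}$ and Sobolev embedding, or just estimate in $L^1$ and interpolate), $\|a_\xi^2\phi_\xi^2\varphi_{\xi_1}^2\|_{L^2}\lesssim \ell^{-\text{const}}r^{-1/2}\sigma^{-1/2}$, and the prefactor $\mu^{-1}=\lambda_{q+1}^{-(1-\alpha)}$, so $\|w_{q+1}^t\|_{L^2}\lesssim \mu^{-1}\ell^{-\text{const}}(r\sigma)^{-1/2}\ll\delta_{q+1}^{1/2}$ since $\mu r\sigma = \lambda_{q+1}^{1-\alpha}\lambda_{q+1}^{-(1-6\alpha)}\lambda_{q+1}^{-(1-2\alpha)}=\lambda_{q+1}^{7\alpha-1}\to 0$ and $\alpha\le 1/8$. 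Choosing $a$ large enough absorbs all these into $\frac{M}{2}\delta_{q+1}^{1/2}$.

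Next, for \eqref{Lp of perturbations} and \eqref{W1p} I would simply apply Lemma \ref{philemma}: the principal part obeys $\|w_{q+1}^p\|_{L^p}\lesssim \sum_\xi \|a_\xi\|_{C^0}\|\phi_\xi\varphi_{\xi_1}\|_{L^p}\lesssim \ell^{-3}(r\sigma)^{1/p-1/2}$ by \eqref{phiproduct} with $M=0$ and \eqref{aCN} with $N=1$ bounding $\|a_\xi\|_{C^0}$; for $W^{1,p}$ one either lets a derivative fall on $a_\xi$ (cost $\ell^{-3}$, no $\lambda_{q+1}$) or on $\phi_\xi\varphi_{\xi_1}$ (cost $\lambda_{q+1}$ by \eqref{phiproduct}), giving the stated $\ell^{-9}\lambda_{q+1}(r\sigma)^{1/p-1/2}$ after checking the corrector and temporal parts are of the same or lower order using their curl-curl and $\mu^{-1}$ structure together with \eqref{phiperpLp}, \eqref{phiparaLp}. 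Finally \eqref{C1 of perturbations}: here I take $p=\infty$ and also allow one time derivative; the worst terms are $\partial_t\phi_\xi$, which by \eqref{phiparaLp} costs $(\lambda_{q+1}\sigma)\mu r^{-1-1}\cdot r^{-1/2}$-type factors — being careful, $\|\partial_t(a_\xi\phi_\xi\varphi_{\xi_1})\|_{L^\infty}\lesssim \ell^{-3}(\lambda_{q+1}\sigma\mu) r^{-1}\sigma^{-1/2}\cdot r^{-1/2}$ and likewise $\mu^{-1}\partial_t$ of the temporal corrector brings back a clean power — and all of these are bounded by $\lambda_{q+1}^{4-\delta}$ for some $\delta>0$ because $800<\alpha b$ forces $\ell^{-\text{const}}=\lambda_q^{\text{const}}$ to be a small power of $\lambda_{q+1}=\lambda_q^b$; hence the sum over the finitely many $\xi$ is $\le \frac{1}{10}\lambda_{q+1}^4$ for $a$ large. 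The main obstacle, and the only place where one must be scrupulous rather than merely careful with exponents, is the $L^2$ estimate \eqref{L2 of perturbations}: one must verify that the Decorrelation error term and all three corrector contributions are summable to something strictly smaller than $\delta_{q+1}^{1/2}$ with the losses $\ell^{-\text{const}}=\lambda_q^{20\cdot\text{const}}$ genuinely beaten by the gains $\lambda_{q+1}^{-\text{positive}}=\lambda_q^{-b\cdot\text{positive}}$, which is exactly what the parameter inequalities $800<\alpha b$, $\beta b^2\le 1/16$ and the definitions \eqref{para def} are engineered to guarantee.
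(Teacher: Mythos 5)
Your plan is correct and follows essentially the same route as the paper: the $L^p$ Decorrelation Lemma for the principal part in $L^2$, the double-curl representation together with the extra factor of $\sigma$ in $\|\Psi_{\xi_1}\|$ relative to $\|\varphi_{\xi_1}\|$ for the incompressibility corrector, the $\mu^{-1}$ prefactor for the temporal corrector, and direct product estimates for the $L^p$, $W^{1,p}$ and $C^1$ bounds. Two minor arithmetic slips worth fixing but not fatal: the incompressibility corrector's gain over the principal part is not a clean $\lambda_{q+1}^{-1}$ but rather $\sigma/r = \lambda_{q+1}^{-4\alpha}$ (the paper gets $\|w_{q+1}^c\|_{L^p}\lesssim\ell^{-6}\sigma^{1/p+1/2}r^{1/p-3/2}$), which still dominates $\ell^{-6}$ only because $\alpha b>800$; and for the temporal corrector the relevant small quantity is $\mu^{-1}(r\sigma)^{-1/2}=\lambda_{q+1}^{-3\alpha}$, not the $\mu r\sigma$ you compute.
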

	\begin{proof}
		Applying Lemma \ref{Lp Decorrelation} with $f=a_\xi$, $g=\phi_\xi\varphi_{\xi_1}$, $\kappa=\lambda_{q+1} \sigma$, we get the following estimate
		\begin{equation*}
			\begin{aligned}
				\|w_{q+1}^p\|_{L^2}+\sum_{i=1}^3\|e_{q+1}^{i,p}\|_{L^2}&\lesssim \sum_{\xi \in \Lambda}\|a_\xi\|_{L^2}\|\phi_\xi\varphi_{\xi_1}\|_{L^2}+(\lambda_{q+1} \sigma)^{-\frac{1}{2}}\|a_\xi\|_{C^1}\|\phi_\xi\varphi_{\xi_1}\|_{L^2}\\
				&\lesssim\delta_{q+1}^\frac{1}{2}+\ell^{-3}(\lambda_{q+1}\sigma)^{-\frac{1}{2}}.
			\end{aligned}
		\end{equation*}
		In view of Lemma \ref{philemma} and Lemma \ref{lemma estimate of a},  we obtain for any $p\in[1,\infty]$
		\begin{equation}\label{Lpprinciple of w}
			\begin{aligned}
				\|w_{q+1}^p\|_{L^p}+\|e_{q+1}^{i,p}\|_{L^p}&\lesssim\|a_\xi\|_{C^1}\|\phi_\xi\Psi_{\xi_1}\|_{L^p}\lesssim\ell^{-3}(\sigma r)^{\frac{1}{p}-\frac{1}{2}},
			\end{aligned}
		\end{equation}
		and
		\begin{equation*}
			\begin{aligned}
				\|w_{q+1}^c\|_{L^p}
				&\lesssim\frac{1}{\lambda_{q+1}^2}\left(\|\nabla^2a_\xi\phi_\xi\Psi_{\xi_1}\xi\|_{L^p}+\|\nabla a_\xi\times\operatorname{curl}(\phi_\xi\Psi_{\xi_1}\xi)\|_{L^p}+\| a_\xi\nabla\phi_\xi\times\operatorname{curl}\Psi_{\xi_1}\xi\|_{L^p}\right)\\
				&\lesssim\frac{1}{\lambda_{q+1}^2}(\|a_\xi\|_{C^2}\|\phi_\xi\Psi_{\xi_1}\|_{L^{p}}+\|a_\xi\|_{C^1}\|\phi_\xi\Psi_{\xi_1}\|_{W^{1,p}}+\|a_\xi\|_{C^0}\|\phi_\xi\|_{W^{1,p}}\|\Psi_{\xi_1}\|_{W^{1,p}})\\
				&\lesssim \lambda_{q+1}^{-2}\left(\ell^{-6}(\sigma r)^{\frac{1}{p}-\frac{1}{2}}+\ell^{-3}\lambda_{q+1}(\sigma r)^{\frac{1}{p}-\frac{1}{2}}+\ell^{-1}\lambda_{q+1}\sigma r^{\frac{1}{p}-\frac{3}{2}}\lambda_{q+1}\sigma^{\frac{1}{p}-\frac{1}{2}}\right)\\
				&\lesssim \lambda_{q+1}^{-2}\ell^{-6}(\sigma r)^{\frac{1}{p}-\frac{1}{2}}(1+\lambda_{q+1}+\lambda_{q+1}^2\sigma r^{-1})\lesssim\ell^{-6}\sigma^{\frac{1}{p}+\frac{1}{2}}{r}^{\frac{1}{p}-\frac{3}{2}}.
			\end{aligned}
		\end{equation*}
		Analogously, for $e_{q+1}^{i,c}$ we also have
		\begin{equation}\label{Lpcorrector of e}
			\|e_{q+1}^{i,c}\|_{L^p}\lesssim\ell^{-6}\sigma^{\frac{1}{p}+\frac{1}{2}}{r}^{\frac{1}{p}-\frac{3}{2}}.
		\end{equation}
		By the definition \eqref{delwt} of $w_{q+1}^t$ we get
		\begin{equation*}
			\|w_{q+1}^t\|_{L^p}\lesssim\frac{1}{\mu}\|a_\xi\|_{C^1}^2\|\phi_\xi\varphi_{\xi_1}\|^2_{L^{2p}}\lesssim\ell^{-6}{\mu}^{-1}(\sigma r)^{\frac{1}{p}-1}.
		\end{equation*}
		For $e_{q+1}^{i,t}$ we also arrive at
		\begin{equation}\label{Lptemporal of e}
			\begin{aligned}
				\|e_{q+1}^{i,t}\|_{L^p}\lesssim\frac{1}{\mu}\|a_\xi\|^2_{C^1}\|\phi_\xi\|_{L^{2p}}^2\|\varphi_{\xi_1}\|_{L^{2p}}^2\lesssim\ell^{-6}{\mu}^{-1}(\sigma r)^{\frac{1}{p}-1}.
			\end{aligned}
		\end{equation}
		The above estimates yield
		\begin{equation*}
			\begin{aligned}
				\|w_{q+1}\|_{L^2}+\sum_{i=1}^3\|e_{q+1}^{i}\|_{L^2}&\lesssim\delta_{q+1}^\frac{1}{2}+\ell^{-3}(\lambda_{q+1}\sigma)^{-\frac{1}{2}}+\ell^{-6}\frac{\sigma}{r}+\ell^{-6}{\mu}^{-1}(\sigma r)^{-\frac{1}{2}}\lesssim\delta_{q+1}^\frac{1}{2},\\
				\|w_{q+1}\|_{L^p}+\sum_{i=1}^3\|e_{q+1}^{i}\|_{L^p}&\lesssim\ell^{-3}(\sigma r)^{\frac{1}{p}-\frac{1}{2}}+\ell^{-6}\sigma^{\frac{1}{p}+\frac{1}{2}}{r}^{\frac{1}{p}-\frac{3}{2}}+\ell^{-6}{\mu}^{-1}(\sigma r)^{\frac{1}{p}-1}\lesssim\ell^{-3}(\sigma r)^{\frac{1}{p}-\frac{1}{2}},
			\end{aligned}
		\end{equation*}
		where we used Definition \eqref{para def} and parameter inequality \eqref{para ineq}. The implicit constants in the above estimates only depend on $\varepsilon_v$ and $\varepsilon_F$. Hence we can choose a large $M$ such that \eqref{L2 of perturbations} holds.
		
		Now we consider \eqref{W1p}. By Lemma \ref{philemma} and Lemma \ref{lemma estimate of a} we have
		\begin{equation}\label{W1PP}
			\begin{aligned}
				\left\|w_{q+1}^p\right\|_{W^{1,p}}+\sum_{i=1}^3\left\|e_{q+1}^{i,p}\right\|_{W^{1,p}}\lesssim\sum_{\xi\in\Lambda}\|a_\xi\|_{C^1}\|\phi_\xi\varphi_{\xi_1}\|_{W^{1,p}}\lesssim\ell^{-3}\lambda_{q+1}\sigma^{\frac{1}{p}-\frac{1}{2}}r^{\frac{1}{p}-\frac{1}{2}}.
			\end{aligned}
		\end{equation}
		Moreover, we obtain from \eqref{phiparaLp} and \eqref{phiperpLp} that $\|\phi_\xi\|_{W^{k,p}}\lesssim (\lambda_{q+1}\sigma)^kr^{\frac{1}{p}-\frac{1}{2}-k}$ and $\|\Psi_{\xi_1}\|_{W^{k,p}}\lesssim \lambda_{q+1}^k\sigma^{\frac{1}{p}-\frac{1}{2}}$, $k=0,1,2,3$.
		Then for $w_{q+1}^c$ and $e_{q+1}^{i,c}$ we have
		\begin{equation*}
			\begin{aligned}
				&\left\|w_{q+1}^c\right\|_{W^{1,p}}+\sum_{i=1}^3\left\|e_{q+1}^{i,c}\right\|_{W^{1,p}}\\
				&\lesssim\sum_{\xi\in\Lambda}\lambda_{q+1}^{-2}\|a_\xi\|_{C^3}\left(\|\phi_\xi\|_{W^{1,p}}\|\Psi_{\xi_1}\|_{W^{2,p}} +\|\phi_\xi\|_{W^{2,p}}\|\Psi_{\xi_1}\|_{W^{1,p}}+\|\phi_\xi\|_{W^{3,p}}\|\Psi_{\xi_1}\|_{L^{p}}\right)\\ &\lesssim\lambda_{q+1}^{-2}\ell^{-9}\left((\lambda_{q+1}\sigma) r^{\frac{1}{p}-\frac{3}{2}}\lambda_{q+1}^2+(\lambda_{q+1}\sigma)^2 r^{\frac{1}{p}-\frac{5}{2}}\lambda_{q+1}+(\lambda_{q+1}\sigma)^3 r^{\frac{1}{p}-\frac{7}{2}}\right)\sigma^{\frac{1}{p}-\frac{1}{2}}\\
				&\lesssim\lambda_{q+1}\ell^{-9} \sigma^{\frac{1}{p}+\frac{1}{2}}r^{\frac{1}{p}-\frac{3}{2}},
			\end{aligned}
		\end{equation*}
		where we used the fact $\sigma\ll r$. Similarly, in order to estimate $w_{q+1}^t$ and $e_{q+1}^{i,t}$, using \eqref{phiproduct} and Lemma \ref{lemma estimate of a} one obtains
		\begin{equation}\label{W1PT}
			\|w_{q+1}^t\|_{W^{1,p}}+\|e_{q+1}^{i,t}\|_{W^{1,p}}\lesssim\sum_{\xi\in\Lambda}\frac{1}{\mu}\|a_\xi\|_{C^1}^2\|\phi_\xi^2\varphi_{\xi_1}^2\|_{W^{1,p}}\lesssim\ell^{-6}\mu^{-1}\lambda_{q+1}\sigma^{\frac{1}{p}-1}r^{\frac{1}{p}-1}.
		\end{equation}
		We conclude from \eqref{W1PP}-\eqref{W1PT} that
		\begin{equation*}
			\begin{aligned}
				\|w_{q+1}\|_{W^{1,p}}+\|e_{q+1}^{i}\|_{W^{1,p}}&\lesssim\ell^{-3}\lambda_{q+1}\sigma^{\frac{1}{p}-\frac{1}{2}}r^{\frac{1}{p}-\frac{1}{2}}+\ell^{-9}\lambda_{q+1} \sigma^{\frac{1}{p}+\frac{1}{2}}r^{\frac{1}{p}-\frac{3}{2}}+\ell^{-6}\mu^{-1}\lambda_{q+1}\sigma^{\frac{1}{p}-1}r^{\frac{1}{p}-1}\\
				&\lesssim\ell^{-3}\lambda_{q+1}\sigma^{\frac{1}{p}-\frac{1}{2}}r^{\frac{1}{p}-\frac{1}{2}},
			\end{aligned}
		\end{equation*}
		which yields \eqref{W1p}.
		
		Finally, we estimate the $C^1$ norm of $w_{q+1}$ and $e_{q+1}^{i}$. Applying Lemma \ref{philemma} we obtain
		\begin{equation*}
			\begin{aligned}
				\|w_{q+1}^p+w_{q+1}^c\|_{C^1_{t,x}}+\sum_{i=1}^3\|e_{q+1}^{i,p}+e_{q+1}^{i,c}\|_{C^1_{t,x}}
				\lesssim&\sum_{\xi\in\Lambda}\lambda_{q+1}^{-2}\|\operatorname{curl}\operatorname{curl}(a_\xi\phi_\xi\Psi_{\xi_1})\|_{C^1_{t,x}}\\
				\lesssim&\lambda_{q+1}^{-2}\ell^{-9}\lambda_{q+1}^3
				r^{-\frac{1}{2}}\sigma^{-\frac{1}{2}}\ll\lambda_{q+1}^4,\\
			\end{aligned}
		\end{equation*}
		and
		\begin{align*}
			\|w_{q+1}^t\|_{C^1_{t,x}}+\sum_{i=1}^3\|e_{q+1}^{i,t}\|_{C^1_{t,x}}\lesssim\mu^{-1}\sum_{\xi\in\Lambda}\|a_\xi^2\|_{C^1_{t,x}}\|\phi_\xi^2\|_{C^1_{t,x}}\|\varphi_{\xi_1}^2\|_{C^1_{t,x}}
			\lesssim\mu^{-1}\ell^{-6}\lambda_{q+1}^2\sigma^{-1}r^{-1}\ll\lambda_{q+1}^{4}.
		\end{align*}
		This completes the proof of \eqref{C1 of perturbations}.  
	\end{proof}
	
	Applying standard mollification estimates we have
	\begin{gather}
		\label{vq vl L2}\|v_q-v_\ell\|_{L^2}\lesssim \|v_q-v_\ell\|_{C^0}\lesssim\ell\|v_q\|_{C^1}\lesssim\ell\lambda_q^4\ll\delta_{q+1}^\frac{1}{2},\\
		\label{Eq El L2}\|F^i_q-F^i_\ell\|_{L^2}\lesssim \|F^i_q-F^i_\ell\|_{C^0}\lesssim\ell\|F^i_q\|_{C^1}\lesssim\ell\lambda_q^4\ll\delta_{q+1}^\frac{1}{2},
	\end{gather}
	where we used the fact that $\beta b^2\leq \frac{1}{16}$.
	Combining \eqref{L2 of perturbations}, \eqref{vq vl L2}, and \eqref{Eq El L2} we obtain
	\begin{gather*}
		\|v_{q+1}-v_{q}\|_{L^2}\lesssim \|w_{q+1}\|_{L^2}+\|v_q-v_\ell\|_{L^2}\lesssim\delta_{q+1}^\frac{1}{2},\\
		\|F^i_{q+1}-F^i_{q}\|_{L^2}\lesssim \|e^i_{q+1}\|_{L^2}+\|F^i_q-F^i_\ell\|_{L^2}\lesssim\delta_{q+1}^\frac{1}{2}.
	\end{gather*}
	Hence the solutions we construct satisfy the inductive estimate \eqref{iteration L2}. Similarly, one can check that 
	\begin{equation*}
		\|v_q-v_\ell\|_{L^1}+\|F^i_{q+1}-F^i_{q}\|_{L^1}\lesssim \ell\lambda_q^4\ll\delta_{q+2}.
	\end{equation*}
	Combining this with \eqref{Lp of perturbations} to obtain 
	\begin{align*}
		\|v_{q+1}-v_{q}\|_{L^1}+\|F^i_{q+1}-F^i_{q}\|_{L^1}\lesssim\|w_{q+1}\|_{L^1}+\|e^i_{q+1}\|_{L^1}+\|v_q-v_\ell\|_{L^1}+\|F^i_{q+1}-F^i_{q}\|_{L^1}\ll \delta_{q+2},
	\end{align*}
	which yields the inductive estimate \eqref{iteration L1}. We then check the $C^1$ estimate for $v_{q+1}$ and $F^i_{q+1}$. Using  \eqref{vl CN} and \eqref{C1 of perturbations} to obtain that 
	\begin{gather*}
		\|v_{q+1}\|_{C^1_{t,x}}\leq\|v_\ell\|_{C^1_{t,x}}+ \|w_{q+1}\|_{C^1_{t,x}}\leq \frac{1}{5}\lambda_{q+1}^4,   \\
		\|F^i_{q+1}\|_{C^1_{t,x}}\leq\|F^i_\ell\|_{C^1_{t,x}}+\|e^{i}_{q+1}\|_{C^1_{t,x}}\leq \frac{1}{5}\lambda_{q+1}^4.
	\end{gather*}
	This proves the inductive estimates we have claimed in \eqref{C1norm}. Finally, we estimate  $\operatorname{supp}_t(v_{q+1},F_{q+1}^1,F_{q+1}^2,F_{q+1}^3)$. By definition, it is easy to check that the amplitude functions satisfy
	\begin{align}\label{suppa}
		\bigcup_{\xi\in\Lambda}\operatorname{supp}_ta_\xi\subseteq \operatorname{supp}_t\Phi_q\subset O_\ell\left(\operatorname{supp}_t (R^v_\ell,R^1_\ell,R^2_\ell,R^3_\ell)\right)\subseteq O_{2\ell}\left(\operatorname{supp}_t (R^v_q,R^1_q,R^2_q,R^3_q)\right).
	\end{align}
	Recall that $\ell=\lambda_q^{-20}$ and $\delta_{q+2}=\lambda_1^{3\beta}\lambda_q^{-2\beta b^2}$, by \eqref{para ineq} one gets $10\ell\leq \delta_{q+2}$. We obtain  that 
	\begin{align*}
		\operatorname{supp}_t(w_{q+1},e_{q+1}^1,e_{q+1}^2,e_{q+1}^3)\subseteq  \bigcup_{\xi\in\Lambda}\operatorname{supp}_ta_\xi \subseteq O_{\delta_{q+2}}\left(\operatorname{supp}_t (R^v_q,R^1_q,R^2_q,R^3_q)\right).
	\end{align*}
	Combining this with \eqref{suppmol} yields that 
	\begin{equation}
		\begin{aligned}\label{incresupp}
			\operatorname{supp}_t(v_{q+1},F_{q+1}^1,F_{q+1}^2,F_{q+1}^3)&\subseteq    \operatorname{supp}_t(w_{q+1},e_{q+1}^1,e_{q+1}^2,e_{q+1}^3)\cup\operatorname{supp}_t(v_{\ell},F_{\ell}^1,F_{\ell}^2,F_{\ell}^3)\\
			&\subseteq  \bigcup_{i=1}^3O_{\delta_{q+2}}\left(\operatorname{supp}_t(v_{q},R^v_{q},F^{i}_{q},R^i_{q})\right).
		\end{aligned}
	\end{equation}
	
	\section{The Estimate of Stresses}\label{eststress}
	
	\subsection{Decomposition of the Stresses}
	Our goal is to show that $R_{q+1}^v$ and $R_{q+1}^i$ satisfy the inductive estimates \eqref{STRESS L1}, \eqref{STRESS C1} and \eqref{itersupp}. Recall that $v_{q+1}=v_\ell+w_{q+1}$, and $F_{q+1}^i=F_\ell^i+e_{q+1}^i$.  Using \eqref{mollified elastic}, \eqref{tpc} and \eqref{def vq+1} we obtain
	\begin{equation}
		\begin{aligned}
			\operatorname{div}{R}_{q+1}^{i}
			=&\underbrace{\partial_{t} e^i_{q+1}+\operatorname{div}\left( e^i_{q+1}\otimes v_\ell+F_\ell^i \otimes w_{q+1}- w_{q+1}\otimes F^i_\ell- v_\ell\otimes e^i_{q+1} \right)}_{\operatorname{div} {R}_{l i n}^{i}} \\
			&+\operatorname{div}\bigg(e_{q+1}^i \otimes (w_{q+1}^{c}+w_{q+1}^{t}) - (w_{q+1}^{c}+w_{q+1}^{t})\otimes e_{q+1}^i\notag\\
			&\underbrace{~~~~~+(e_{q+1}^{i,c}+e_{q+1}^{i,t})\otimes w_{q+1}^{p} - w_{q+1}^{p}\otimes (e_{q+1}^{i,c}+e_{q+1}^{i,t})\bigg)
			}_{\operatorname {div } R_{ {corr}}^{i}}\\
			&\underbrace{+\operatorname{div}\left( e_{q+1}^{i,p}\otimes w_{q+1}^{p} - w_{q+1}^{p}\otimes e_{q+1}^{i,p} +{R}_\ell^i\right)+\partial_te_{q+1}^{i,t}}_{\operatorname{div} {R}_{o s c}^{i}} +\operatorname{div}R^i_{comm}
		\end{aligned}
	\end{equation}
	and
	\begin{equation*}
		\begin{aligned}
			&\operatorname{div} {R}_{q+1}^{v}-\nabla p_{q+1} \\
			&\quad=\underbrace{(-\Delta)^\theta w_{q+1}+\partial_{t}(w_{q+1}^p+w_{q+1}^c)+\operatorname{div}\left(v_\ell \otimes w_{q+1}+w_{q+1} \otimes v_\ell-F^i_\ell \otimes e^i_{q+1}-e^i_{q+1} \otimes F^i_\ell\right)}_{\operatorname{div} R_{\operatorname{lin}}^{v}+\nabla p_{l i n}} \\
			&\quad\quad+\underbrace{\operatorname{div}\bigg(w_{q+1} \otimes (w_{q+1}^{c}+w_{q+1}^{t})+(w_{q+1}^{c}+w_{q+1}^{t}) \otimes w_{q+1}^{p}-e^i_{q+1} \otimes (e_{q+1}^{i,c}+e_{q+1}^{i,t})-(e_{q+1}^{i,c}+e_{q+1}^{i,t}) \otimes e_{q+1}^{i,p}\bigg)}_{\operatorname {div } {R}_{c o r r}^{v}+\nabla{p_{c o r r}}} \\
			&\quad\quad+\underbrace{\operatorname{div}\left(w_{q+1}^{p} \otimes w_{q+1}^{p}-e_{q+1}^{i,p} \otimes e_{q+1}^{i,p}+{R}_\ell^{v}\right)+\partial_tw_{q+1}^{t}}_{\operatorname{div} {R}_{o s c}^{v}+\nabla p_{o s c}}+\operatorname{div}R^v_{comm}-\nabla p_\ell.
		\end{aligned}
	\end{equation*}
	\subsection{Linear and Corrector Errors }
	The  symmetric and skew-symmetric inverse divergence operators allow us to define different parts of the Reynolds stresses as follows:
	\begin{equation}\label{Rlico}
		\begin{aligned}
			&{R}_{lin}^{i}=\mathcal{R}^{F}\left(\partial_{t}(e^{i,p}_{q+1}+e^{i,c}_{q+1})\right)+\left(e^i_{q+1}\otimes v_\ell+F_\ell^i \otimes w_{q+1}- w_{q+1}\otimes F^i_\ell- v_\ell\otimes e^i_{q+1}\right), \\
			&R_{corr}^{i}=e_{q+1}^i\otimes(w_{q+1}^{c}+w_{q+1}^{t})-(w_{q+1}^{c}+w_{q+1}^{t})\otimes e_{q+1}^i+(e_{q+1}^{i,c}+e_{q+1}^{i,t})\otimes w_{q+1}^{p}-w_{q+1}^{p}\otimes(e_{q+1}^{i,c}+e_{q+1}^{i,t}), \\
			&{R}_{lin}^{v}=\mathcal{R}(-\Delta)^\theta w_{q+1}+\mathcal{R}\partial_{t}(w_{q+1}^{p}+w_{q+1}^{c})+v_\ell\hat{\otimes}w_{q+1}+w_{q+1}\hat{\otimes}v_\ell-F^i_\ell\hat{\otimes}e^i_{q+1}-e^i_{q+1}\hat{\otimes} F^i_\ell,\\
			&{R}_{corr}^{v}=w_{q+1}\hat{\otimes}(w_{q+1}^{c}+w_{q+1}^{t})+(w_{q+1}^{c}+w_{q+1}^{t}) \hat{\otimes} w_{q+1}^{p}-e^i_{q+1} \hat{\otimes}(e_{q+1}^{i,c}+e_{q+1}^{i,t})-(e_{q+1}^{i,c}+e_{q+1}^{i,t}) \hat{\otimes} e_{q+1}^{i,p}.
		\end{aligned}
	\end{equation}
	We first estimate the linear errors. Recall that
	\begin{equation*}
		\begin{aligned}
			&w_{q+1}^{p}+w_{q+1}^{c}=\frac{1}{N_\Lambda^2\lambda_{q+1}^2}\operatorname{curl}\operatorname{curl}\sum_{\xi\in\Lambda}a_\xi\phi_\xi\Psi_{\xi_1}\xi,\\
			&e^{i,p}_{q+1}+e^{i,c}_{q+1}=\frac{1}{N_\Lambda^2\lambda_{q+1}^2}\operatorname{curl}\operatorname{curl}\sum_{\xi\in\Lambda_i}a_\xi\phi_\xi\Psi_{\xi_1}\xi_2.
		\end{aligned}
	\end{equation*}
	For $p\in(1,2)$, by \eqref{est antidiv}, \eqref{phiparaLp}, \eqref{phiperpLp} and \eqref{aCN} we have
	\begin{equation*}
		\begin{aligned}
			\|\mathcal{R}\left(\partial_{t}(w_{q+1}^{p}+w_{q+1}^{c})\right)\|_{L^p}\lesssim& \lambda_{q+1}^{-2} \sum_{\xi\in\Lambda}\|\mathcal{R}\operatorname{curl}\operatorname{curl}\partial_t \left(a_\xi\phi_\xi\Psi_{\xi_1}\xi\right)\|_{L^p}\\
			\lesssim&\lambda_{q+1}^{-2}\left(\|a_\xi\|_{C^2} \|\partial_t\nabla\phi_\xi\|_{L^p}\|\Psi_{\xi_1}\|_{L^p}+\|a_\xi\|_{C^2} \|\partial_t\phi_\xi\|_{L^p}\|\nabla\Psi_{\xi_1}\|_{L^p}\right)\\
			\lesssim&\lambda_{q+1}^{-2}\left(\ell^{-6}\mu\left(\frac{\lambda_{q+1}\sigma}{r}\right)^2 \sigma^{\frac{1}{p}-\frac{1}{2}}r^{\frac{1}{p}-\frac{1}{2}}+\ell^{-6}\mu \frac{\lambda_{q+1}\sigma}{r}\sigma^{\frac{1}{p}-\frac{1}{2}}\lambda_{q+1}r^{\frac{1}{p}-\frac{1}{2}}\right)\\
			\lesssim&\ell^{-6}\mu\sigma^{\frac{1}{p}+\frac{1}{2}}r^{\frac{1}{p}-\frac{3}{2}}.
		\end{aligned}
	\end{equation*}
	Similar result holds for $\mathcal{R}^{F}\left(\partial_{t} (e^{i,p}_{q+1}+e^{i,c}_{q+1})\right)$ in $R_{lin}^i$. Next we estimate the high-low interaction terms in linear errors. In view of \eqref{vl CN}, \eqref{Lpprinciple of w}, \eqref{Lpcorrector of e} and \eqref{Lptemporal of e}, one has
	\begin{equation*}
		\begin{aligned}
			\|v_\ell\otimes e^i_{q+1} \|_{L^1}
			\leq&\left\|v_{\ell} \otimes e_{q+1}^{i,p}\right\|_{L^{1}}+\left\|v_{\ell} \otimes e_{q+1}^{i,c}\right\|_{L^{1}}+\left\|v_{\ell} \otimes e_{q+1}^{i,t}\right\|_{L^{1}} \\ \leq&\left\|v_{\ell}\right\|_{C^{0}}\left\|e_{q+1}^{i,p}\right\|_{L^{1}}+\left\|v_{\ell}\right\|_{C^{0}}\left\|e_{q+1}^{i,c}\right\|_{L^1}+\left\|v_{\ell}\right\|_{C^{0}}\left\|e_{q+1}^{i,t}\right\|_{L^{1}} \\ \lesssim& \lambda_q^4\ell^{-3}(\sigma r)^\frac{1}{2}+\lambda_q^4\ell^{-6}\sigma^{\frac{3}{2}}r^{-\frac{1}{2}}+\lambda_q^4\ell^{-6}\mu^{-1}\\
			\lesssim&\lambda_q^4\ell^{-6} (\sigma r)^{\frac{1}{2}}.
		\end{aligned}
	\end{equation*}
	Other high-low interaction terms can be handled similarly. For the dissipation term, combining \eqref{Lp of perturbations} and \eqref{W1p} we obtain
	\begin{equation*}
		\|\mathcal{R}(-\Delta)^\theta w_{q+1}\|_{L^p}\lesssim\|w_{q+1}\|^{1-\theta_*}_{L^p}\|w_{q+1}\|_{W^{1,p}}^{\theta_*}\lesssim\ell^{-9}\lambda_{q+1}^{\theta_*}(\sigma r)^{\frac{1}{p}-\frac{1}{2}},
	\end{equation*}
	where $\theta_*$ is defined by \eqref{theta}.
	Thus we obtain
	\begin{equation}\label{estimate linv}
		\begin{aligned}
			\|{R}_{l i n}^{v}\|_{L^1}&\lesssim\|\mathcal{R}(-\Delta)^\theta w_{q+1}\|_{L^p}+\|\mathcal{R}\left(\partial_{t} (w_{q+1}^{p}+w_{q+1}^{c})\right)\|_{L^p}\\
			&\quad\quad+\|v_\ell\hat{\otimes}w_{q+1}+w_{q+1}\hat{\otimes}v_\ell-F^i_\ell\hat{\otimes}e^i_{q+1}-e^i_{q+1}\hat{\otimes} F^i_\ell\|_{L^1}\\
			&\lesssim\ell^{-9}\lambda_{q+1}^{\theta_*}(\sigma r)^{\frac{1}{p}-\frac{1}{2}}       +\ell^{-6}\mu\sigma^{\frac{1}{p}+\frac{1}{2}}r^{\frac{1}{p}-\frac{3}{2}}+\lambda_q^4\ell^{-6} (\sigma r)^{\frac{1}{2}}\\
			&\lesssim\ell^{-9}(\sigma r)^{\frac{1}{p}-\frac{1}{2}}(\lambda_{q+1}^{\theta_*}+\mu\sigma r^{-1}).
		\end{aligned}
	\end{equation}
	Analogously we get
	\begin{equation*}
		\begin{aligned}
			\|{R}_{l i n}^{i}\|_{L^1}&\lesssim\|\mathcal{R}^F\partial_{t}\left( e^{i,p}_{q+1}+e^{i,c}_{q+1}\right)\|_{L^p}+\|e^i_{q+1}\otimes v_\ell+F_\ell^i \otimes w_{q+1}- w_{q+1}\otimes F^i_\ell- v_\ell\otimes e^i_{q+1}\|_{L^1}\\
			&\lesssim\ell^{-6}\mu\sigma^{\frac{1}{p}+\frac{1}{2}}r^{\frac{1}{p}-\frac{3}{2}}+\lambda_q^4\ell^{-6} (\sigma r)^{\frac{1}{2}}\\
			&\lesssim\ell^{-6}\mu\sigma^{\frac{1}{p}+\frac{1}{2}}r^{\frac{1}{p}-\frac{3}{2}}.
		\end{aligned}
	\end{equation*}
	Next we estimate the corrector errors. Due to the smallness of the corrector terms, we apply \eqref{L2 of perturbations}, \eqref{Lpcorrector of e}, \eqref{Lptemporal of e}, and obtain
	\begin{equation}\label{estimate corri}
		\begin{aligned}
			\|R_{corr}^i\|_{L^1}&\lesssim \|w_{q+1}^c+w_{q+1}^t\|_{L^2}\|e_{q+1}^i\|_{L^2}+\|w_{q+1}\|_{L^2}\|e_{q+1}^{i,c}+e_{q+1}^{i,t
			}\|_{L^2}\\
			&\lesssim (\ell^{-6}\sigma r^{-1}+\ell^{-6}\mu^{-1}(\sigma r)^{-\frac{1}{2}})\delta_{q+1}^{\frac{1}{2}}\\
			&\lesssim\ell^{-6}\mu^{-1}(\sigma r)^{-\frac{1}{2}}\delta_{q+1}^{\frac{1}{2}}.
		\end{aligned}
	\end{equation}
	And $R_{corr}^v$ obeys the same estimate.
	\subsection{Oscillation Error}\label{secosc}
	By \eqref{cancelRi} we have
	\begin{equation*}
		\begin{aligned}
			&\operatorname{div}\left(e_{q+1}^{i,p}\otimes w_{q+1}^{p}-w_{q+1}^{p}\otimes e_{q+1}^{i,p}+R^i_\ell \right)\\
			&=\operatorname{div}\left(\sum_{\xi\in \Lambda_{i}}a_{\xi}^{2}\phi_{\xi}^{2}\varphi_{\xi_1}^{2}(\xi_{2}\otimes\xi-\xi\otimes\xi_{2})+R^i_\ell\right)+\operatorname{div}\left(\sum_{\substack{\xi\in \Lambda, \xi' \in \Lambda_{i}\\ \xi \neq \xi^{'} }} a_{\xi} a_{\xi^{'}} \phi_{\xi} \varphi_{\xi_1} \phi_{\xi'}\varphi_{\xi_1'}(\xi'_{2} \otimes \xi-\xi \otimes \xi'_{2})\right)\\
			&=\operatorname{div}\left(\sum_{\xi\in \Lambda_{i}}a_{\xi}^{2}\mathbb{P}_{\geq \lambda_{q+1}\sigma}(\phi_\xi^2\varphi_{\xi_1}^2)(\xi_{2}\otimes\xi-\xi\otimes\xi_{2})\right)+\operatorname{div}\left(\sum_{\substack{\xi\in \Lambda, \xi' \in \Lambda_{i}\\ \xi \neq \xi^{'} }} a_{\xi} a_{\xi^{'}} \phi_{\xi} \varphi_{\xi_1} \phi_{\xi'}\varphi_{\xi_1'}(\xi'_{2} \otimes \xi-\xi \otimes \xi'_{2})\right)\\
			&:=\operatorname{div}(E_{i,1}+E_{i,2}),
		\end{aligned}
	\end{equation*}
	where we also used that $\phi_\xi$ and $\varphi_{\xi_1}$ are $(\frac{\mathbb{T}}{\lambda_{q+1}\sigma})$ periodic, hence $\mathbb{P}_{\neq 0}(\phi_\xi^2\varphi_{\xi_1}^2)=\mathbb{P}_{\geq \lambda_{q+1}\sigma}(\phi_\xi^2\varphi_{\xi_1}^2)$.
	
	The oscillation stress is given by
	\begin{equation}\label{Riosc}
		\begin{aligned}
			R_{osc}^i=\mathcal{R}^F(\operatorname{div}E_{i,1}+\partial_te_{q+1}^{i,t})+E_{i,2}.
		\end{aligned}
	\end{equation}
	It is easy to check that $\operatorname{div}(\operatorname{div}E_{i,1})=0$, hence the term $\mathcal{R}^F(\operatorname{div}E_{i,1}+\partial_te_{q+1}^{i,t})$ is well-defined.
	In view of \eqref{phippproduct}, Lemma \ref{lemma estimate of a}, and  Lemma \ref{Lp Decorrelation}, the term $E_{i,2}$ can be easily estimated as
	\begin{equation}\label{osc1}
		\begin{aligned}
			\|E_{i,2}\|_{L^1}&\lesssim\sum_{\substack{\xi\in \Lambda, \xi' \in \Lambda_{i}\\ \xi \neq \xi^{'} }}\|a_{\xi} a_{\xi^{'}} \phi_{\xi} \varphi_{\xi_1} \phi_{\xi'}\varphi_{\xi_1'}(\xi'_{2} \otimes \xi-\xi \otimes \xi'_{2})\|_{L^1}\\
			&\lesssim\sum_{\substack{\xi\in \Lambda, \xi' \in \Lambda_{i}\\ \xi \neq \xi^{'} }}\|a_\xi a_{\xi'}\|_{L^1}\|\phi_{\xi} \varphi_{\xi_1} \phi_{\xi'}\varphi_{\xi_1'}\|_{L^1}\lesssim\delta_{q+1}\frac{\sigma}{r}.
		\end{aligned}
	\end{equation}
	Then we focus on the term $E_{i,1}$. By the definition of $\phi_\xi$ and $\varphi_{\xi_1}$, it is easy to check that  $\xi_2\cdot \nabla(\phi_\xi^2 \varphi^2_{\xi_1})=0$ and $\xi\cdot \nabla \varphi_{\xi_1}^2=0$, then
	one has
	\begin{align*}
		\operatorname{div}\left((\phi_\xi^2\varphi_{\xi_1}^2)(\xi_{2}\otimes\xi-\xi\otimes\xi_{2})\right)=\xi\cdot \nabla(\phi_\xi^2 \varphi^2_{\xi_1})\xi_2-\xi_2\cdot \nabla(\phi_\xi^2 \varphi^2_{\xi_1})\xi=(\xi\cdot \nabla\phi_\xi^2 )\varphi^2_{\xi_1}\xi_2.
	\end{align*}
	Using \eqref{phi t} we have
	\begin{equation}\label{Ei1}
		\begin{aligned}
			\operatorname{div}E_{i,1}&=\sum_{\xi\in \Lambda_{i}}\mathbb{P}_{\neq 0}\left(\nabla a_\xi^2\mathbb{P}_{ \geq \lambda_{q+1}\sigma}(\phi_{\xi}^{2}\varphi_{\xi_1}^{2})(\xi_{2}\otimes\xi-\xi\otimes\xi_{2})\right)+\sum_{\xi\in \Lambda_{i}}\mathbb{P}_{\neq 0}\left(a_\xi^2(\xi\cdot\nabla\phi_{\xi}^{2})\varphi_{\xi_1}^{2}\xi_2\right)\\
			&=\sum_{\xi\in \Lambda_{i}}\mathbb{P}_{\neq 0}\left(\nabla a_\xi^2\mathbb{P}_{ \geq \lambda_{q+1}\sigma}(\phi_{\xi}^{2}\varphi_{\xi_1}^{2})(\xi_{2}\otimes\xi-\xi\otimes\xi_{2})\right)+\frac{1}{\mu}\sum_{\xi\in \Lambda_{i}}\mathbb{P}_{\neq 0}\left(a_\xi^2\partial_t\phi_{\xi}^{2}\varphi_{\xi_1}^{2}\xi_2\right).
		\end{aligned}
	\end{equation}
	
	Notice that the amplitude term $ a_\xi^2$ oscillates at a frequency much lower than $\lambda_{q+1}\sigma$, so we can exploit the frequency separation between $\nabla a_\xi^2$ and $\phi_\xi^2\varphi_{\xi_1}^2$ and gain a factor of $\lambda_{q+1}\sigma$ from the inverse divergence operator. More precisely, we recall the following lemma, which is a variant of Lemma B.1 of \cite{bucknonuniqueness}:
	\begin{lem}\label{gain}
		Let $a\in C^2(\mathbb{T}^3)$. For $1<p<\infty$, and for any $f\in L^p(\mathbb{T}^3)$, we have
		\begin{equation*}
			\||\nabla|^{-1}\mathbb{P}_{\neq 0}(a\mathbb{P}_{\geq k}f)\|_{L^p}\lesssim k^{-1}\|a\|_{C^2}\|f\|_{L^p}.
		\end{equation*}
	\end{lem}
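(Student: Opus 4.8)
The plan is to separate scales inside the Fourier multiplier itself. Fix a smooth radial cutoff $\chi$ with $\chi\equiv 1$ on $\{|\xi|\le 1/4\}$ and $\operatorname{supp}\chi\subset\{|\xi|\le 1/2\}$, let $\mathbb{P}_{<k/2}$ and $\mathbb{P}_{\ge k/2}$ denote the Fourier multipliers with symbols $\chi(\xi/k)$ and $1-\chi(\xi/k)$, and write
\begin{equation*}
|\nabla|^{-1}\mathbb{P}_{\neq 0}\big(a\,\mathbb{P}_{\ge k}f\big)= |\nabla|^{-1}\mathbb{P}_{\neq 0}\mathbb{P}_{< k/2}\big(a\,\mathbb{P}_{\ge k}f\big)+ |\nabla|^{-1}\mathbb{P}_{\neq 0}\mathbb{P}_{\ge k/2}\big(a\,\mathbb{P}_{\ge k}f\big)=: I+II .
\end{equation*}
The factor $k^{-1}$ will come from the symbol of $|\nabla|^{-1}$ on the high-frequency piece $II$, and from the $C^2$-regularity of $a$ on the low-frequency piece $I$. (A Bony paraproduct splitting of $a\,\mathbb{P}_{\ge k}f$ would work equally well, but the two-term version is cleaner to carry out.)

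For $II$: the composite symbol $|\xi|^{-1}\big(1-\chi(\xi/k)\big)$, after the rescaling $\xi\mapsto k\xi$, equals $k^{-1}$ times $|\xi|^{-1}(1-\chi(\xi))$, which is smooth (it vanishes near the origin) and satisfies the Mikhlin--H\"ormander bounds, independently of $k\ge 1$. Hence $|\nabla|^{-1}\mathbb{P}_{\neq 0}\mathbb{P}_{\ge k/2}$ is bounded on $L^p$ with norm $\lesssim k^{-1}$ for every $1<p<\infty$, and since multiplication by $a$ and the projection $\mathbb{P}_{\ge k}$ are bounded on $L^p$ we obtain $\|II\|_{L^p}\lesssim k^{-1}\|a\|_{C^0}\|f\|_{L^p}$.

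For $I$ one exploits frequency support: $\mathbb{P}_{\ge k}f$ is supported in $\{|\xi|\ge k\}$, so its product with any function of frequency $<k/4$ has frequency $>3k/4$ and is annihilated by $\mathbb{P}_{<k/2}$. Therefore $\mathbb{P}_{<k/2}\big(a\,\mathbb{P}_{\ge k}f\big)=\mathbb{P}_{<k/2}\big((\mathbb{P}_{\ge k/4}a)\,\mathbb{P}_{\ge k}f\big)$: only the high-frequency tail of $a$ contributes. On $\mathbb{T}^3$ the operator $|\nabla|^{-1}\mathbb{P}_{\neq 0}$ is bounded on $L^p$ (the mean is removed and the remaining frequencies are $\ge 1$), and summing the elementary bound $2^{-2j}\|a\|_{C^2}$ for the $L^\infty$ norms of the dyadic Littlewood--Paley pieces of $a$ at frequencies $2^j\gtrsim k$ gives $\|\mathbb{P}_{\ge k/4}a\|_{L^\infty}\lesssim k^{-2}\|a\|_{C^2}$. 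Hence
\begin{equation*}
\|I\|_{L^p}\lesssim \big\|(\mathbb{P}_{\ge k/4}a)\,\mathbb{P}_{\ge k}f\big\|_{L^p}\le \|\mathbb{P}_{\ge k/4}a\|_{L^\infty}\,\|\mathbb{P}_{\ge k}f\|_{L^p}\lesssim k^{-2}\|a\|_{C^2}\|f\|_{L^p}.
\end{equation*}
Adding the two bounds and using $k\ge 1$ and $\|a\|_{C^0}\le\|a\|_{C^2}$ completes the proof.

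The only point requiring care is the frequency bookkeeping in $I$: the cutoffs must be chosen with enough separation that the product of the low part of $a$ with $\mathbb{P}_{\ge k}f$ is genuinely killed by $\mathbb{P}_{<k/2}$ — the constants $1/4$ and $1/2$ are arranged precisely for this — and, if $\mathbb{P}_{\ge k}$ is taken as a sharp cutoff, one should dominate it by a smooth fattened projection so that all operators above stay $L^p$-bounded (in the applications $\mathbb{P}_{\ge k}f$ is a periodic function with $\mathbb{P}_{\ge k}f=\mathbb{P}_{\neq 0}f$, which makes this moot). The substantive ingredient is simply that $C^2$ control of $a$ forces its frequency-$\gtrsim k$ part to be $O(k^{-2})$ in $L^\infty$, and that is what turns the loss of $|\nabla|^{-1}$ near the origin into an honest gain of $k^{-1}$.
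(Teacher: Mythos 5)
Your proof is correct, and the decomposition you use is essentially dual to the standard one. The paper does not prove this lemma itself but cites Lemma~B.1 of Buckmaster--Vicol, whose proof splits $a=\mathbb{P}_{<k/2}a+\mathbb{P}_{\ge k/2}a$: the high part of $a$ is $O(k^{-2})$ in $L^\infty$ by the $C^2$ bound, while $(\mathbb{P}_{<k/2}a)\mathbb{P}_{\ge k}f$ is supported at frequency $\gtrsim k$, so $|\nabla|^{-1}\mathbb{P}_{\neq 0}$ gains $k^{-1}$ there. You instead split the \emph{output} frequency into $\mathbb{P}_{<k/2}$ and $\mathbb{P}_{\ge k/2}$, and then in the low-output piece recover the observation that only $\mathbb{P}_{\ge k/4}a$ survives, which amounts to the same thing; the two decompositions are equivalent up to where one inserts the frequency cutoff, and both isolate the same two mechanisms (Mikhlin rescaling on high output frequencies, $C^2\Rightarrow k^{-2}$ smallness of the high-frequency tail of $a$). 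Your bookkeeping with the cutoff supports $\{|\xi|\le 1/4\}\subset\{\chi=1\}$ and $\operatorname{supp}\chi\subset\{|\xi|\le 1/2\}$ is consistent, the Littlewood--Paley summation $\|\mathbb{P}_{\ge k/4}a\|_{L^\infty}\lesssim k^{-2}\|a\|_{C^2}$ is a correct use of $\|\Delta_j a\|_{L^\infty}\lesssim 2^{-2j}\|a\|_{C^2}$, and your closing remark about $\mathbb{P}_{\ge k}$ needing to be a smooth (or otherwise $L^p$-bounded) projection is exactly the standing convention in this literature, so no gap there. In short: correct, and morally the same proof as the cited reference, just organized from the output side rather than by splitting $a$.
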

	Using \eqref{Ei1} and \eqref{partial t et}, we have
	\begin{align*}
		&\operatorname{div}E_{i,1}+\partial_te_{q+1}^{i,t}\\
		&=\sum_{\xi\in \Lambda_{i}}\mathbb{P}_{\neq 0}\left(\nabla a_\xi^2\mathbb{P}_{ \geq \lambda_{q+1}\sigma}(\phi_{\xi}^{2}\varphi_{\xi_1}^{2})(\xi_{2}\otimes\xi-\xi\otimes\xi_{2})\right)-\frac{1}{\mu}\sum_{\xi\in\Lambda_i}\mathbb{P}_{\neq 0}\left(\partial_ta_\xi^2\mathbb{P}_{\geq \lambda_{q+1}\sigma}(\phi_\xi^2\varphi_{\xi_1}^2)\xi_2\right) \\
		&\quad\quad+\frac{1}{\mu}\sum_{\xi\in \Lambda_{i}}\nabla\Delta^{-1}\operatorname{div}\partial_t(a_\xi^2\mathbb{P}_{ \geq \lambda_{q+1}\sigma}(\phi_{\xi}^{2}\varphi_{\xi_1}^{2})\xi_2).
	\end{align*}
	Observe that
	\begin{align*}
		&\nabla\Delta^{-1}\operatorname{div}\partial_t(a_\xi^2\mathbb{P}_{ \geq \lambda_{q+1}\sigma}(\phi_{\xi}^{2}\varphi_{\xi_1}^{2})\xi_2)\\
		&=\nabla\Delta^{-1}\operatorname{div}(\partial_ta_\xi^2\mathbb{P}_{ \geq \lambda_{q+1}\sigma}(\phi_{\xi}^{2}\varphi_{\xi_1}^{2})\xi_2)
		+\nabla\Delta^{-1}\operatorname{div}(a_\xi^2\partial_t\mathbb{P}_{ \geq \lambda_{q+1}\sigma}(\phi_{\xi}^{2}\varphi_{\xi_1}^{2})\xi_2)\\
		&=\nabla\Delta^{-1}\operatorname{div}(\partial_ta_\xi^2\mathbb{P}_{ \geq \lambda_{q+1}\sigma}(\phi_{\xi}^{2}\varphi_{\xi_1}^{2})\xi_2)
		+\mu\nabla\Delta^{-1}\left(\xi_2\cdot\nabla a_\xi^2\mathbb{P}_{ \geq \lambda_{q+1}\sigma}(\xi\cdot\nabla\phi_{\xi}^{2}\varphi_{\xi_1}^{2})\right)\\
		&=\nabla\Delta^{-1}\operatorname{div}(\partial_ta_\xi^2\mathbb{P}_{ \geq \lambda_{q+1}\sigma}(\phi_{\xi}^{2}\varphi_{\xi_1}^{2})\xi_2)
		+\mu\nabla\Delta^{-1}\left(\xi\cdot\nabla  \left(\xi_2\cdot\nabla a_\xi^2\mathbb{P}_{ \geq \lambda_{q+1}\sigma}(\phi_{\xi}^{2}\varphi_{\xi_1}^{2})\right)\right)\\
		&\quad-\mu\nabla\Delta^{-1} \left(\left(\xi\cdot\nabla (\xi_2\cdot\nabla a_\xi^2
		)\right)\mathbb{P}_{ \geq \lambda_{q+1}\sigma}(\phi_{\xi}^{2}\varphi_{\xi_1}^{2})\right).
	\end{align*}
	Hence we get
	\begin{equation}\label{osci}
		\begin{aligned}
			&\|\mathcal{R}^F(\operatorname{div}E_{i,1}+\partial_te_{q+1}^{i,t})\|_{L^p}\\
			&\lesssim\sum_{\xi\in \Lambda_{i}}\left\{\left\||\nabla|^{-1}\left(\nabla a_\xi^2\mathbb{P}_{ \geq \lambda_{q+1}\sigma}(\phi_{\xi}^{2}\varphi_{\xi_1}^{2})\right)\right\|_{L^p}+\frac{1}{\mu}\left\||\nabla|^{-1}\left(\partial_ta_\xi^2\mathbb{P}_{\geq \lambda_{q+1}\sigma}(\phi_\xi^2\varphi_{\xi_1}^2)\right)\right\|_{L^p} \right.\\
			&\quad\quad\quad\quad+\frac{1}{\mu}\left\||\nabla|^{-1}\nabla\Delta^{-1}\operatorname{div}(\partial_ta_\xi^2\mathbb{P}_{ \geq \lambda_{q+1}\sigma}(\phi_{\xi}^{2}\varphi_{\xi_1}^{2}))\right\|_{L^p}\\
			&\quad\quad\quad\quad+\left\||\nabla|^{-1}\nabla\Delta^{-1}\left(\xi\cdot\nabla  \left(\xi_2\cdot\nabla a_\xi^2\mathbb{P}_{ \geq \lambda_{q+1}\sigma}(\phi_{\xi}^{2}\varphi_{\xi_1}^{2})\right)\right)\right\|_{L^p}\\
			&\quad\quad\quad\quad+\left.\left\||\nabla|^{-1}\nabla\Delta^{-1} \left(\left(\xi\cdot\nabla (\xi_2\cdot\nabla a_\xi^2
			)\right)\mathbb{P}_{ \geq \lambda_{q+1}\sigma}(\phi_{\xi}^{2}\varphi_{\xi_1}^{2})\right)\right\|_{L^p}\right\}\\
			&\lesssim\sum_{\xi\in \Lambda_{i}}\left\{\left\||\nabla|^{-1}\left(\nabla a_\xi^2\mathbb{P}_{ \geq \lambda_{q+1}\sigma}(\phi_{\xi}^{2}\varphi_{\xi_1}^{2})\right)\right\|_{L^p}+\frac{1}{\mu}\left\||\nabla|^{-1}\left(\partial_ta_\xi^2\mathbb{P}_{\geq \lambda_{q+1}\sigma}(\phi_\xi^2\varphi_{\xi_1}^2)\right)\right\|_{L^p} \right.\\
			&\quad\quad\quad\quad+\left\||\nabla|^{-1} \left(\xi_2\cdot\nabla a_\xi^2\mathbb{P}_{ \geq \lambda_{q+1}\sigma}(\phi_{\xi}^{2}\varphi_{\xi_1}^{2})\right)\right\|_{L^p}+\left.\left\||\nabla|^{-1} \left(\left(\xi\cdot\nabla (\xi_2\cdot\nabla a_\xi^2
			)\right)\mathbb{P}_{ \geq \lambda_{q+1}\sigma}(\phi_{\xi}^{2}\varphi_{\xi_1}^{2})\right)\right\|_{L^p}\right\},
		\end{aligned}
	\end{equation}
	where we used the fact that $\|\nabla\Delta^{-1}\operatorname{div}\|_{L^p\rightarrow L^p}+\|\nabla\Delta^{-1}\xi\cdot\nabla\|_{L^p\rightarrow L^p}+\|\nabla\Delta^{-1}\mathbb{P}_{\neq 0}\|_{L^p\rightarrow L^p}\lesssim 1$.
	For the first term, we can apply Lemma \ref{gain} with $a=\nabla a_\xi^2, f=\phi_\xi^2\varphi_{\xi_1}^2, k=\lambda_{q+1}\sigma$ and obtain
	\begin{equation}\label{osci2}
		\begin{aligned}
			\left\||\nabla|^{-1}\left(\nabla a_{\xi}^{2} \mathbb{P}_{ \geq \lambda_{q+1}\sigma}\left(\phi_{\xi}^{2}\varphi_{\xi_1}^2\right)\right)\right\|_{L^p}
			\lesssim&(\lambda_{q+1}\sigma)^{-1}\ell^{-9}(\sigma r)^{\frac{1}{p}-1}.
		\end{aligned}
	\end{equation}
	Other terms in \eqref{osci} can be estimated similarly as the first term.
	In view of \eqref{osc1} and \eqref{osci2}, we can conclude that
	\begin{equation}\label{estimate osci}
		\|{R}_{osc}^{i}\|_{L^1}\lesssim\delta_{q+1}\frac{\sigma}{r}+(\lambda_{q+1}\sigma)^{-1}\ell^{-9}(\sigma r)^{\frac{1}{p}-1}.
	\end{equation}
	
	Next we estimate the Reynolds oscillation stress $R_{osc}^v$. By \eqref{defRF}, \eqref{cancelRv} and the fact that $\xi_2\cdot \nabla (\phi_\xi^2\varphi_{\xi_1}^2)=0$, we can write
	\begin{equation*}
		\begin{aligned}
			&\operatorname{div}(w_{q+1}^{p}\otimes w_{q+1}^{p}-e_{q+1}^{i,p}\otimes e_{q+1}^{i,p}+R^v_\ell )\\
			&=\operatorname{div}\left(\sum_{\xi\in \Lambda_v}a_{\xi}^{2}\phi_{\xi}^{2}\varphi_{\xi_1}^{2}(\xi\otimes\xi)
			+\sum_{i=1}^3\sum_{\xi \in \Lambda_i}a_\xi^2\phi_{\xi}^{2}\varphi_{\xi_1}^{2}(\xi\otimes\xi-\xi_2\otimes\xi_2)+R^v_\ell\right)\\
			&\quad\quad\quad+\operatorname{div}\left(\sum_{\xi\neq\xi' \in \Lambda} a_{\xi} a_{\xi'} \phi_{\xi} \varphi_{\xi_1} \phi_{\xi'}\varphi_{\xi_1'}(\xi \otimes \xi')
			-\sum_{i=1}^3\sum_{\xi\neq\xi' \in \Lambda_i}a_{\xi} a_{\xi'} \phi_{\xi} \varphi_{\xi_1} \phi_{\xi'}\varphi_{\xi_1'}(\xi_2 \otimes \xi'_2)\right)\\
			&=\operatorname{div}\left(\sum_{\xi\in \Lambda_v}a_{\xi}^{2}\phi_{\xi}^{2}\varphi_{\xi_1}^{2}(\xi\otimes\xi)+R^v_\ell+R_F\right)
			+\operatorname{div}\left(\sum_{i=1}^3\sum_{\xi\in \Lambda_i}a_{\xi}^{2}\mathbb{P}_{\geq \lambda_{q+1}\sigma}(\phi_\xi^2\varphi_{\xi_1}^2)(\xi\otimes\xi-\xi_2\otimes\xi_2)\right)\\
			&\quad\quad\quad+\operatorname{div}\left(\sum_{\xi\neq\xi' \in \Lambda} a_{\xi} a_{\xi'} \phi_{\xi} \varphi_{\xi_1} \phi_{\xi'}\varphi_{\xi_1'}(\xi \otimes \xi')
			-\sum_{i=1}^3\sum_{\xi\neq\xi' \in \Lambda_i}a_{\xi} a_{\xi'} \phi_{\xi} \varphi_{\xi_1} \phi_{\xi'}\varphi_{\xi_1'}(\xi_2 \otimes \xi'_2)\right)\\
			&=\operatorname{div}\left(\sum_{\xi\in \Lambda}a_{\xi}^{2}\mathbb{P}_{\geq \lambda_{q+1}\sigma}(\phi_\xi^2\varphi_{\xi_1}^2)(\xi\otimes\xi)\right)+\nabla\rho_v
			-\sum_{i=1}^3\sum_{\xi\in \Lambda_i}\xi_2\cdot\nabla a_{\xi}^{2}\mathbb{P}_{\geq \lambda_{q+1}\sigma}(\phi_\xi^2\varphi_{\xi_1}^2)\xi_2\\
			&\quad\quad\quad+\operatorname{div}\left(\sum_{\xi\neq\xi' \in \Lambda} a_{\xi} a_{\xi'} \phi_{\xi} \varphi_{\xi_1} \phi_{\xi'}\varphi_{\xi_1'}(\xi \otimes \xi')
			-\sum_{i=1}^3\sum_{\xi\neq\xi' \in \Lambda_i}a_{\xi} a_{\xi'} \phi_{\xi} \varphi_{\xi_1} \phi_{\xi'}\varphi_{\xi_1'}(\xi_2 \otimes \xi'_2)\right).
		\end{aligned}
	\end{equation*}
	We put $\nabla\rho_v$ in the pressure $\nabla p_{osc}$. 
	The last two terms can be estimated similarly as \eqref{osc1} by Lemma \ref{gain}. Now we consider the first term. In view of \eqref{partial t wt} we obtain
	\begin{equation*}
		\begin{aligned}
			&\operatorname{div}\left(\sum_{\xi\in \Lambda}a_{\xi}^{2}\mathbb{P}_{\geq \lambda_{q+1}\sigma}(\phi_\xi^2\varphi_{\xi_1}^2)(\xi\otimes\xi)\right)+\partial_t w_{q+1}^t\\
			&=\sum_{\xi\in\Lambda}\nabla a_\xi^2\mathbb{P}_{\geq \lambda_{q+1}\sigma}(\phi_\xi^2\varphi_{\xi_1}^2)\xi\otimes\xi
			-\frac{1}{\mu}\sum_{\xi\in\Lambda}\partial_ta_\xi^2\mathbb{P}_{\geq \lambda_{q+1}\sigma}(\phi_\xi^2\varphi_{\xi_1}^2)\xi+\frac{1}{\mu}\sum_{\xi\in\Lambda}\nabla\left(\Delta^{-1}\operatorname{div}\partial_t(a_\xi^2\phi_\xi^2\varphi_{\xi_1}^2\xi)\right).
		\end{aligned}
	\end{equation*}
	We also apply Lemma \ref{gain} to estimate the first two terms, and classify the last term into $\nabla p_{osc}$.
	The oscillation error $R_{osc}^v$ is given by
	\begin{equation}\label{Rvosc}
		\begin{aligned}
			R_{osc}^v&=\mathcal{R}\left(\sum_{\xi\in\Lambda}\nabla a_\xi^2\mathbb{P}_{\geq \lambda_{q+1}\sigma}(\phi_\xi^2\varphi_{\xi_1}^2)\xi\otimes\xi
			-\frac{1}{\mu}\sum_{\xi\in\Lambda}\partial_ta_\xi^2\mathbb{P}_{\geq \lambda_{q+1}\sigma}(\phi_\xi^2\varphi_{\xi_1}^2)\xi\right)\\
			&\quad\quad-\mathcal{R}\left(\sum_{i=1}^3\sum_{\xi\in \Lambda_i}\xi_2\cdot\nabla a_{\xi}^{2}\mathbb{P}_{\geq \lambda_{q+1}\sigma}(\phi_\xi^2\varphi_{\xi_1}^2)\xi_2\right)+\sum_{\xi\neq\xi' \in \Lambda} a_{\xi} a_{\xi'} \phi_{\xi} \varphi_{\xi_1} \phi_{\xi'}\varphi_{\xi_1'}(\xi \hat{\otimes} \xi')\\
			&\quad\quad-\sum_{i=1}^3\sum_{\xi\neq\xi' \in \Lambda_i}a_{\xi} a_{\xi'} \phi_{\xi} \varphi_{\xi_1} \phi_{\xi'}\varphi_{\xi_1'}(\xi_2 \hat{\otimes} \xi'_2).
		\end{aligned}
	\end{equation}
	Hence we can conclude
	\begin{equation}\label{estimate oscv}
		\|{R}_{osc}^{v}\|_{L^1}\lesssim\delta_{q+1}\frac{\sigma}{r}+(\lambda_{q+1}\sigma)^{-1}\ell^{-15}(\sigma r)^{\frac{1}{p}-1}.
	\end{equation}
	
	
	
	\subsection{Verification of Inductive Estimates for Stresses}
	In this section, we verify inductive estimate \eqref{STRESS L1} for the stresses. By \eqref{commutator}, \eqref{estimate linv}-\eqref{estimate corri}, and \eqref{estimate osci}-\eqref{estimate oscv} we can conclude
	\begin{equation*}
		\begin{aligned}
			&\|R_{q+1}^i\|_{L^1}+\|R_{q+1}^v\|_{L^1}\\
			&\leq \|R_{lin}^i\|_{L^1}+\|R_{osc}^i\|_{L^1}+\|R_{corr}^i\|_{L^1}+\|R_{comm}^i\|_{L^1}+\|R_{lin}^v\|_{L^1}+\|R_{osc}^v\|_{L^1}+\|R_{corr}^v\|_{L^1}+\|R_{comm}^v\|_{L^1}\\
			&\lesssim\ell^{-15}(\sigma r)^{\frac{1}{p}-1}\left(\lambda_{q+1}^{\theta_*}(\sigma r)^\frac{1}{2}+\mu\frac{\sigma^\frac{3}{2}} {r^\frac{1}{2}}+(\lambda_{q+1}\sigma)^{-1}\right)+\delta_{q+1}\frac{\sigma}{r}+\ell^{-6}\mu^{-1}(\sigma r)^{-\frac{1}{2}}\delta_{q+1}^{\frac{1}{2}}+\ell^2\lambda_q^8,
		\end{aligned}
	\end{equation*}
	holds for $p>1$. By \eqref{defpara}, \eqref{para def} and \eqref{defell}, we obtain 
	\begin{align*}
		&\|R_{q+1}^i\|_{L^1}+\|R_{q+1}^v\|_{L^1}\\
		&\lesssim \lambda_{q+1}^{\frac{300}{b}-(2-8\alpha)(\frac{1}{p}-1)}\left(\lambda_{q+1}^{\theta_*-1+4\alpha}+\lambda_{q+1}^{-\alpha}+\lambda_{q+1}^{-2\alpha}\right)+\lambda_1^{3\beta}\lambda_{q+1}^{-4\alpha-2\beta}+\lambda_1^{\frac{3\beta}{2}}\lambda_{q+1}^{\frac{120}{b}-3\alpha-\beta}+\lambda_{q+1}^{-\frac{32}{b}}.
	\end{align*}
	We choose $p$ such that
	\begin{equation*}
		\left(\frac{1}{p}-1\right)(8\alpha-2)=\frac{\alpha}{2},
	\end{equation*}
	namely,
	\begin{equation*}
		p=\frac{4-16\alpha}{4-17\alpha}\in(1,2).
	\end{equation*}
	By \eqref{theta}-\eqref{para ineq}, one has $$\theta_*-1+4\alpha\leq-4\alpha,$$ 
	and 
	\begin{align*}
		\|R_{q+1}^i\|_{L^1}+\|R_{q+1}^v\|_{L^1} & \lesssim \lambda_{q+1}^{\frac{300}{b}-\frac{\alpha}{2}}+\lambda_1^{3\beta}\lambda_{q+1}^{-4\alpha-2\beta}+\lambda_1^{\frac{3\beta}{2}}\lambda_{q+1}^{\frac{120}{b}-3\alpha-\beta}+\lambda_{q+1}^{-\frac{32}{b}}\\
		&\leq \lambda_1^{3\beta}\lambda_{q+1}^{-2\beta b} =\delta_{q+2},
	\end{align*}
	where we have used the fact
	\begin{align*}
		\max\left\{\frac{300}{b}-\frac{\alpha}{2},\  -4\alpha,\  \frac{120}{b}-3\alpha,\  -\frac{32}{b}\right\} = -\frac{32}{b} \leq -2\beta b.
	\end{align*}
	Moreover, by Lemma \ref{lemma estimate of a} and Lemma \ref{philemma} we obtain
	\begin{equation*}
		\|R_{q+1}^i\|_{C^1_{t,x}}+\|R_{q+1}^v\|_{C^1_{t,x}}\lesssim\sum_{\xi\in\Lambda}\|a_\xi^2\|_{C^3_{t,x}}\|\phi_\xi^2\|_{C^3_{t,x}}\|\varphi_{\xi_1}^2\|_{C^3_{t,x}}\leq\lambda_{q+1}^{10}.
	\end{equation*}
	This yields the inductive estimate \eqref{STRESS C1}.
	Finally, by  \eqref{suppa} and the definition of the new stresses (see \eqref{Rcomm}, \eqref{Rlico}, \eqref{Riosc} and \eqref{Rvosc}), we have 
	\begin{align*}
		\operatorname{supp}_t (R^v_{q+1},R^1_{q+1},R^2_{q+1},R^3_{q+1})&\subseteq \left(\bigcup_{\xi\in\Lambda}\operatorname{supp}_ta_\xi\right)\cup\operatorname{supp}_t(v_{\ell},F^1_\ell,F^2_{\ell},F^3_{\ell})\\
		&\subseteq \bigcup_{i=1}^3O_{\delta_{q+2}}(\operatorname{supp}_t(v_q,R^v_q,F^i_q,R^i_q)).
	\end{align*}
	Combining this with \eqref{incresupp}, we obtain \eqref{itersupp}.
	This completes the proof of Proposition \ref{main iteration}.
	\section{Acknowledgement}
	The authors were supported by NSFC (grant No.11725102). The authors are grateful to Professor Zhen Lei for helpful suggestions and discussions. %

\begin{thebibliography}{99}
		\bibitem{Beekieweak}R. Beekie, T. Buckmaster, and V. Vicol. Weak solutions of ideal MHD which do not conserve magnetic helicity. Ann. PDE, 6(1), 2020.
		\bibitem{BuckAnomalous}T. Buckmaster, C. De Lellis, P. Isett, and L. Sz\'{e}kelyhidi Jr. Anomalous dissipation for 1/5 H\"{o}lder Euler flows. Annals of Mathematics, 182(1):127-172, 2015.
		\bibitem{BuckOnsager}T. Buckmaster. Onsager's conjecture almost everywhere in time. Communications in Mathematical Physics, 333(3):1175-1198, 2015.
		\bibitem{buckwild}T. Buckmaster, M. Colombo, and V.Vicol. Wild solutions of the Navier-Stokes equations whose singular sets in time have Hausdorff dimension strictly less than 1. J. Eur. Math. Soc., 24(9):3333-3378, 2022.
		\bibitem{BDSV18}T. Buckmaster, C. De Lellis, L. Sz\'{e}kelyhidi Jr., and V. Vicol. Onsager’s conjecture
		for admissible weak solutions. Comm. Pure Appl. Math., 72(2):229–274, 2018.
		\bibitem{bucknonuniqueness} T. Buckmaster and V. Vicol. Nonuniqueness of weak solutions to the Navier-Stokes equation. Annals of Mathematics, 189(1):101-144, 2019.
		\bibitem{BuckConvex} T. Buckmaster and V. Vicol. Convex integration and phenomenologies in turbulence. EMS Surveys in Mathematical Sciences, 6(1-2):173-263, 2019.
		\bibitem{CheminAbout2001}J. Chemin and N. Masmoudi. About lifespan of regular solutions of equations related to viscoelastic fluids. SIAM Journal on Mathematical Analysis, 33(1):84-112, 2001.
		\bibitem{ChenGlobal}Q. Chen and C. Miao, Global well-posedness of viscoelastic fluids of Oldroyd type in Besov spaces, Nonlinear Analysis, 68(7):1928-1939, 2008.
		\bibitem{CordobaLack}D. Cordoba, D. Faraco, and F. Gancedo. Lack of uniqueness for weak solutions of the incompressible porous media equation. Archive for Rational Mechanics and Analysis, 200(3):725-746, 2011.
		\bibitem{ConstantinOnsager}P. Constantin, W. E, and E. Titi. Onsager's conjecture on the energy conservation for solutions of Euler's equation. Communications in Mathematical Physics, 165(1):207-209, 1994.
		\bibitem{DaiNon} M. Dai. Non-unique weak solutions in Leray-Hopf class of the 3D Hall-MHD system. arXiv:1812.11311, 2018.
		\bibitem{DS17}S. Daneri and L. Sz\'{e}kelyhidi. Non–uniqueness and h–principle for H\"older–continuous
		weak solutions of the Euler equations. Arch. Ration. Mech. Anal., 224(2):471–514,
		2017.
		\bibitem{LellisEuler}C. De Lellis and  L. Sz\'{e}kelyhidi Jr. The Euler equations as a differential inclusion. Annals of Mathematics, 170(3):1417-1436, 2007.
		\bibitem{camillosurvey}C. De Lellis and  L. Sz\'{e}kelyhidi Jr. The h-principle and the equation of fluid dynamics. American Mathematical Society, 49(3):347-375, 2012.
		\bibitem{LellisDissi}C. De Lellis and L. Sz\'{e}kelyhidi Jr. Dissipative continuous Euler flows. Inventiones mathematicae, 193(2):377-407, 2013.
		\bibitem{Lellisdissi}C. De Lellis and L. Sz\'{e}kelyhidi Jr. 	
		Dissipative Euler flows and Onsager's conjecture. Journal Of The European Mathematical Society, 16(7):1467-1505, 2014.
		\bibitem{ElgindiGlobal}T. Elgindi and F. Rousset, Global regularity for some Oldroyd-B type model, Communications on Pure and Applied Mathematics, 68(11):2005-2021, 2015.
		\bibitem{E94} G. Eyink. Energy dissipation without viscosity in ideal hydrodynamics I. Fourier
		analysis and local energy transfer. Phys. D, 78(3–4):222–240, 1994.
		\bibitem{Huglobal}X. Hu and F. Lin, Global existence of two-dimensional incompressible visoelastic flows with discontinuous initial data, Communications on Pure and Applied Mathematics, 69(2):372-404, 2016.
		\bibitem{GuillopeGlobal}C. Guillope and J. Saut, Global existence and one-dimensional nonlinear stability of shearing motions of viscoelastic fluids of Oldroyd type, Esaim Mathematical Modelling and Numerical Analysis, 24(3):369-401, 2009.
		\bibitem{IsettOn}P. Isett. On the endpoint regularity in Onsager's conjecture, arXiv: 1706.01549, 2017.
		\bibitem{IsettA}P. Isett. A proof of Onsager's conjecture. Annals of Mathematics, 188(3):871-963, 2018.
		\bibitem{LeiGlobal2005}Z. Lei and Y. Zhou. Global existence of classical solutions for the two-dimensional Oldroyd model via the incompressible limit. SIAM Journal on Mathematical Analysis, 37(3):797-814, 2005.
		\bibitem{LeiGlobal2008}Z. Lei, C. Liu, and Y. Zhou. Global solutions for incompressible viscoelastic fluids. Archive for Rational Mechanics and Analysis, 188(3):371-398, 2008.
		\bibitem{LeiAlmost} Z. Lei, T. C. Sideris and Y. Zhou. Almost global existence for 2D incompressible isotropic elastodynamics. Transactions of the American Mathematical Society, 367(11):8175-8197, 2015.
		\bibitem{LeiGlobal}Z. Lei. Global well-posedness of incompressible elastodynamics in 2D. Communications on Pure and Applied Mathematics, 69(11):2072-2106, 2016.
		\bibitem{LinOn}F. Lin, C. Liu, and P. Zhang. On hydrodynamics of viscoelastic fluids. Communications on Pure and Applied Mathematics, 58(11):1437-1471, 2005.
		\bibitem{Lions}P. L. Lions and N. Masmoudi. Global solutions for some Oldroyd models of non-Newtonian flows. Chinese Annals of Mathematics, 21(2):131-146, 2000.
		\bibitem{LZZ22}Y. Li, Z. Zeng, and D. Zhang. Non-uniqueness of weak solutions to 3D magnetohydrodynamic equations, arxiv: 2112.10515, 2022. 
		\bibitem{Luonon}T. Luo and E. Titi. Non-uniqueness of weak solutions to hyperviscous Navier-Stokes equations - on sharpness of J.-L. Lions exponent. Calculus of Variations and Partial Differential Equations, 59:1-15, 2020.
		\bibitem{ModenaNon}S. Modena and L. Sz\'{e}kelyhidi, Jr. Non-uniqueness for the transport equation with Sobolev vector fields. Annals of PDE, 4(2):1-38, 2018.
		\bibitem{NashC1}J. Nash. C1 isometric imbeddings. Annals of Mathematics, 60(3):393-396, 1954.
		\bibitem{Siderisglobal}T. C. Sideris and B. Thomases. Global existence for 3d incompressible isotropic elastodynamcis. Communications on Pure and Applied Mathematics, 60(12):1707-1730, 2007.
		\bibitem{ZhuGlobal}Y. Zhu, Global small solutions of 3D incompressible Oldroyd-B model without damping mechanism, Journal of Functional Analysis, 274(7):2039-2060, 2018.
		
		
	\end{thebibliography}

\end{document}